\documentclass[11pt]{article}

\usepackage[margin=2.4cm]{geometry}
\usepackage{amsmath,amssymb}

\usepackage{amsthm}

\numberwithin{equation}{section}

\usepackage{enumitem,color}
\usepackage{hyperref}
\usepackage{multirow,booktabs}

\usepackage{graphicx}

\newtheorem{theorem}{Theorem}[section]
\newtheorem{lemma}[theorem]{Lemma}
\newtheorem{remark}[theorem]{Remark}

\newcommand{\p}{\partial}

\title{The optimal error analysis of nonuniform L1 method for the variable-exponent subdiffusion model}

\author{
Wenlin Qiu\thanks{School of Mathematics, Shandong University, Jinan 250100, China. Email: qwllkx12379@163.com.}
\and Kexin Li\thanks{Corresponding author. School of Statistics and Mathematics, Yunnan University of Finance and Economics, Kunming 650221, China. Email: likx1213@163.com.}
\and Yiqun Li\thanks{Corresponding author. School of Mathematics and Statistics, Wuhan University, Wuhan 430072, China. Email: YiqunLi24@outlook.com.}
\and Hao Zhang\thanks{School of Computer Science and Engineering, Sun Yat-sen University, Guangzhou 510006, Guangdong, China. Email: zhangh925@mail2.sysu.edu.cn.}
}

%Corresponding author.

\date{}

\begin{document}

\maketitle

\begin{abstract}
This work investigates the optimal error estimate of the fully discrete scheme for the variable-exponent subdiffusion model under the nonuniform temporal mesh. We apply the perturbation method to reformulate the original model into its equivalent form, and apply the L1 scheme as well as the interpolation quadrature rule to discretize the Caputo derivative
term and the convolution term in the reformulated model, respectively.
We then prove the temporal convergence rates $O\left(N^{-\min\{2-\alpha(0), \, r\alpha(0)\}} \right)$ under the nonuniform mesh,  which improves the existing convergence results in [Zheng, {\it CSIAM T. Appl. Math.} 2025] for $r\geq \frac{2-\alpha(0)}{\alpha(0)}$. Numerical results are presented to substantiate the theoretical findings.

\vskip 1mm
\textbf{Keywords:} Subdiffusion, variable exponent, L1 method, graded meshes, error estimate
\end{abstract}

\section{Introduction}

We consider the following variable-exponent subdiffusion problem, which describes the transient dispersion in the highly heterogeneous porous medium \cite{JiSun,FanHu,FuZhu,SunCha,VanHen,ZhengMMS}
\begin{align}
   & \partial_t^{\alpha(t)} u(x,t) - \Delta u(x,t)  = f(x,t), \quad (x,t) \in \Omega \times (0, T], \label{eq1.1} \\
   & u(x,0) = u_0(x), \quad x \in \Omega, \quad  u(x,t) = 0, \quad (x,t) \in \partial\Omega \times (0, T], \label{eq1.2}
\end{align}
where $\Omega \subset \mathbb{R}^{d}$ ( $1 \le d \le 3$) is a bounded domain with smooth boundary $\partial\Omega$, and the functions $u_{0}$ and $f$ are prescribed. Here the variable-exponent Caputo fractional derivative  with the variable exponent $ 0< \alpha(t) <1$ is defined by \cite{GarGiu,Jin,LorHar,Pod}
\begin{align*}
    \partial_t^{\alpha(t)} v(x,t)
    := \beta_{1-\alpha(t)} * \partial_t v = \int_{0}^{t} \frac{(t-s)^{-\alpha(t-s)}}{\Gamma(1-\alpha(t-s))} \p_sv (x,s)ds,
\end{align*}
where $*$ denotes the convolution, $\beta_{\nu} := t^{\nu-1}/\Gamma(\nu)$, and $\Gamma(\cdot)$ is the Euler Gamma function.

From the perspective of numerical analysis, the convolution kernel $\beta_{1-\alpha(t)}$ in \eqref{eq1.1} which is neither positive definite nor monotone brings essential challenges. To circumvent this issue, the work  \cite{Zheng} proposed two methods summarized in Figure \ref{ModelRef}, namely, the perturbation method (Transform 1) and the convolution method (Transform 2), to reformulate model \eqref{eq1.1}--\eqref{eq1.2} into its equivalent forms to carry out the analysis.
\begin{figure}[ht]\label{ModelRef}
\center
	\hspace{-2cm}
\includegraphics[page=1,trim=0cm 8cm 4.5cm 3cm,clip,width=1\textwidth]{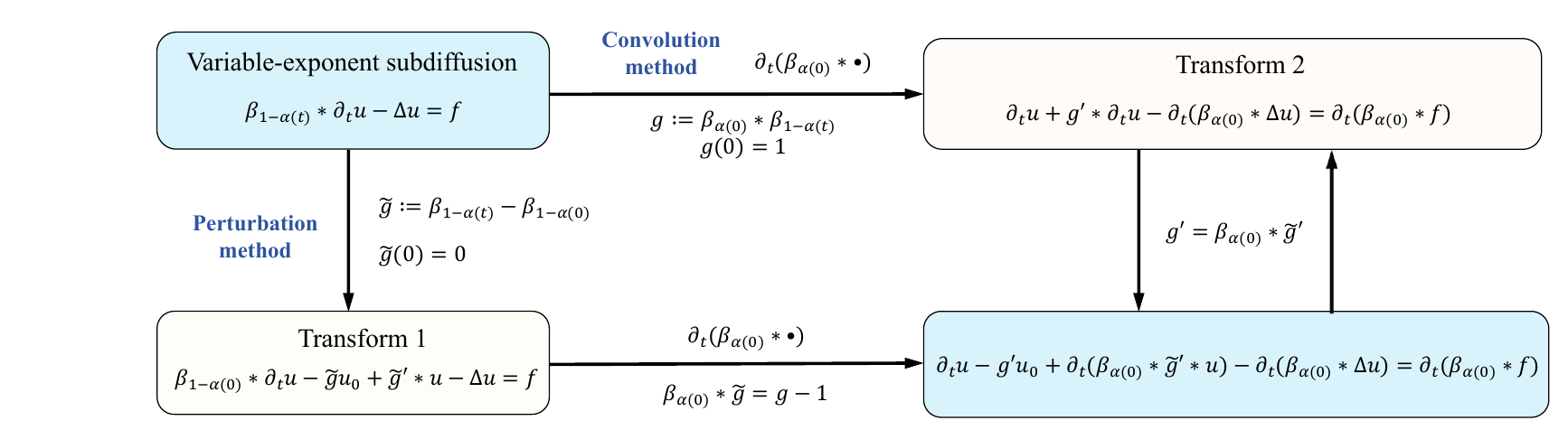}
\caption{The equivalent models of equation \eqref{eq1.1} via perturbation method and convolution method.} 
\end{figure}
In addition, the numerical analysis based on the reformulated model (Transform 2 in Figure \ref{ModelRef}) is conducted, which induces an integro-differential equation after integration with respect to the time variable $t$. The temporal $L^2(0,T)$ convergence result under the uniform temporal mesh was established with the time convergence rate $O(N^{-(\frac{3}{2}\alpha_0+\frac{1}{2})})$ and $\alpha_0 : = \alpha(0)$, which is not optimal since as $\alpha_0 \to 0^+$, the resulting convergence rate converges to  $\frac{1}{2}$. In this case,  the convergence rate is strictly below first order and therefore not satisfactory. This motivates the current investigations on the optimal error estimate of the variable-exponent subdiffusion model \eqref{eq1.1}--\eqref{eq1.2}.

To compensate for this gap, we adopt the Transform 1 in Figure~\ref{ModelRef} to carry out the numerical analysis
\begin{align}\label{VEPDE}
  & \partial_t^{\alpha_0}  u(x,t)
 + (\widetilde{g}' * u) (x,t)
- \Delta u(x,t)
 = F(x,t), \quad (x,t) \in \Omega \times (0, T];   \\
 & u(x,0) = u_0(x), \quad x \in \Omega, \quad  u(x,t) = 0, \quad (x,t) \in \partial\Omega \times (0, T], \nonumber
\end{align}
where $F:=f + \widetilde{g} \, u_0$ and the kernel function $\widetilde{g}$ is defined as follows
\begin{equation}\label{g}
\widetilde{g}(t)
= \int_0^t \frac{t^{-\alpha(z)}}{\Gamma(1-\alpha(z))}
\Big[ -\alpha'(z)\ln t + \bar{\gamma}\, (1-\alpha(z)) \, \alpha'(z) \Big]\, dz.
\end{equation}
 Here $\bar{\gamma}(z):=\Gamma'(z)/\Gamma(z)$ and $\widetilde{g}$ defined in \eqref{g}
as well as its derivative could be bounded by
% . We then need to give the property of $\widetilde{g}$ for subsequent analysis. We employ
% \[
% t^{-\alpha(z)}
% = t^{-\alpha_0} e^{[\alpha_0 - \alpha(z)] \ln t}
% \le t^{-\alpha_0} e^{Cz|\ln t|}
% \le t^{-\alpha_0} e^{Ct |\ln t|}
% \le C t^{-\alpha_0}
% \]
% to estimate $\widetilde{g}(t)$ as
% \[
% |\widetilde{g}(t)|
% \le C \int_0^t z^{-\alpha_0}(1+|\ln t|)\, dz
% \le C t^{1-\alpha_0}(1+|\ln t|)
% \to 0 \quad \text{as } t\to 0,
% \]
% and bound $\widetilde{g}'(t)$ as
\begin{align}\label{qq1}
 |\widetilde{g}(t)|
\to 0 \quad \text{as } t\to 0,\quad |\widetilde{g}'(t)| \le Ct^{-\alpha^*}, \quad 0<\alpha_0<\alpha^*<1.
\end{align}

In this work, we follow \cite{Zheng} to assume  the  following regularity results   for problem \eqref{eq1.1}--\eqref{eq1.2}
\begin{align}\label{regu}
    \|\partial_t \Delta u\| \leq Ct^{\alpha_0-1}, \quad  \|\partial_t^2 \Delta u\| \leq Ct^{\alpha_0-2}, \quad t \in (0, T],
\end{align}
where the positive constant  $C$ denotes a generic positive constant independent of the space-time step sizes, whose value may vary at different occurrences. Our \emph{main contributions} are summarized as follows:
\begin{itemize}
\item We apply the L1 scheme and interpolation quadrature rule to discretize the Caputo derivative term and the convolution term in the reformulated model \eqref{VEPDE}, and follow \cite{Liao,Liao1} to apply the discrete convolution kernel to analyze the time-discrete scheme, based on which the error estimate of the fully discrete Galerkin scheme is established.

\item Based on the solution regularity assumptions \eqref{regu}, we prove the temporal convergence rates $O\left(N^{-\min\{2-\alpha_0, \, r\alpha_0\}} \right)$ under the nonuniform mesh, which exactly coincides with the convergence results for the constant-exponent subdiffusion models in \cite{Liao,Stynes}. For $r\geq \frac{2-\alpha_0}{\alpha_0}$,    the derived convergence rate reaches $O(N^{-(2-\alpha_0)})$, which   improves the existing result in \cite{Liu,Zheng}. 
\end{itemize}

The remainder of this paper is organized as follows.
In  Section~\ref{sec3}, we develop and analyze the stability of the time-semidiscrete scheme for the reconstructed problem. The error analysis of the time-semidiscrete scheme and the fully discrete finite element scheme is analyzed in Section~\ref{sec4}. In Section~\ref{sec6},  we provide some numerical examples to validate the theoretical results. The concluding remarks are addressed in the last section.

\section{Stability of the time-semidiscrete scheme}\label{sec3}

In this section, we shall establish and analyze the stability of the time-semidiscrete scheme for the problem \eqref{VEPDE}.

\subsection{Establishment of time-semidiscrete scheme}

Here, we introduce several notations used throughout the analysis. Let $N$ be a positive integer and define the nonuniform time levels \cite{Kop,KopMen,Quan}
\begin{equation}\label{mesh}
      t_n = T\Big(\frac{n}{N}\Big)^{r}=(n\tau)^r, \quad \tau=\frac{T^{1/r}}{N}, \quad  0\le n\le N 
\end{equation}
 with the grading index $r\ge 1$, and the step sizes $\tau_n=t_n-t_{n-1}$. Let $u^n=u(x,t_n)$. Evaluating \eqref{VEPDE} at $t=t_n$ for $1\le n\le N$ gives
\begin{equation}\label{qwl03_new}
  \partial_t^{\alpha_0}u(x,t_n)-\Delta u(x,t_n)
  = -(\widetilde g' * u)(x,t_n)+F(x,t_n).
\end{equation}
The Caputo derivative is approximated by the nonuniform L1 formula \cite{JinLiZouSINUM,Liao,Stynes}
\begin{equation}\label{qwl04_new}
  D_N^{\alpha_0}u^n
  =\sum_{k=1}^{n}\int_{t_{k-1}}^{t_k}
     \beta_{1-\alpha_0}(t_n-s)\frac{\delta_\tau u^k}{\tau_k}\,ds
  =\sum_{k=1}^{n} a_{n-k}^{(n)}\delta_\tau u^k ,
\end{equation}
where $\delta_\tau u^k:=u^{k}-u^{k-1}$ and the weights
\begin{equation*}
  a_{n-k}^{(n)}
  =\int_{t_{k-1}}^{t_k}\frac{\beta_{1-\alpha_0}(t_n-s)}{\tau_k}\,ds
  =\frac{\beta_{2-\alpha_0}(t_n-t_{k-1})-\beta_{2-\alpha_0}(t_n-t_k)}{\tau_k}, \quad 1\leq k \leq n
\end{equation*}
form a nonincreasing sequence. Under the regularity assumption \eqref{regu}, the L1 truncation error satisfies (see \cite{Stynes})
\begin{equation}\label{qwl06_new}
  \|(R_1)^n\|
  =\|D_N^{\alpha_0}u^n-\partial_t^{\alpha_0}u(\cdot,t_n)\|
  =O\!\left(n^{-\min\{2-\alpha_0,\; r\alpha_0\}}\right),
  \quad n\ge 1.
\end{equation}
 We use the interpolation quadrature rule to approximate the integral term on the right-hand side of \eqref{qwl03_new}
\begin{equation}\label{qwl07}
\begin{split}
     (\tilde{k} * u) (x,t_n) & = \int_{0}^{t_1} \tilde{k}(t_n-s)  u (x,s) ds  + \sum\limits_{j=2}^{n} \int_{t_{j-1}}^{t_j} \tilde{k}(t_n-s)  u (x,s) ds  \\
     & \approx \omega_{n,1} u^{1} + \sum\limits_{j=2}^{n} \omega_{n,j} u^{j-1/2},
\end{split}
\end{equation}
with $\tilde{k}(t) :=\widetilde{g}'(t)$, $u^{j-1/2}:=\frac{u^{j-1}+u^{j}}{2}$, and the weights
\begin{equation}\label{qwl08}
\begin{split}
     \omega_{n,j} := \int_{t_{j-1}}^{t_j} \tilde{k}(t_n-s)ds = \widetilde{g}(t_n-t_{j-1}) - \widetilde{g}(t_n-t_{j}), \quad  1\leq j \leq n.
\end{split}
\end{equation}
The quadrature errors of the numerical approximation \eqref{qwl07} are given by
\begin{equation}\label{qwl09}
  \begin{split}
     (R_2)^1  & = \int_{0}^{t_1} \tilde{k}(t_1-s)  u (x,s) ds - \omega_{1,1} u^{1}  = \int_{0}^{t_1} \tilde{k}(t_1-s) \Big( \int_{t_1}^{s} \p_{\theta} u(x,\theta)d\theta \Big)  ds, \\
     (R_2)^n  & =  \int_{0}^{t_1} \tilde{k}(t_n-s)  u (x,s) ds - \omega_{n,1} u^{1}  \\
     &\quad +  \sum\limits_{j=2}^{n} \int_{t_{j-1}}^{t_j} \tilde{k}(t_n-s)  u (x,s) ds - \sum\limits_{j=2}^{n} \omega_{n,j} u^{j-1/2} \\
      & = 
      \int_{0}^{t_1} \tilde{k}(t_n-s) \Big( \int_{t_1}^{s} \p_{\theta} u(x,\theta)d\theta \Big)  ds\\
      &\quad+\sum\limits_{j=2}^{n}\int_{t_{j-1}}^{t_j}\tilde{k}(t_n-s) \Big(  \int_{t_{j-1}}^{t_j} K(s;\theta) \p_{\theta}^2 u(x,\theta)d\theta \Big)  ds \\
      &=: (R_{21})^n +(R_{22})^n,
      \quad n \geq 2
  \end{split}
\end{equation}
with the Peano kernel
\begin{equation}\label{ker}
 K(s;\theta) :=
   \begin{cases}
   \displaystyle  \frac{(\theta-t_{j-1})(s-t_j) }{t_j-t_{j-1}}, & t_{j-1}\leq \theta \leq s, \\
   \displaystyle  \frac{(\theta-t_{j})(s-t_{j-1}) }{t_j-t_{j-1}}, & s< \theta \leq t_j.
   \end{cases}
\end{equation}
Substituting \eqref{qwl04_new} and \eqref{qwl07} into \eqref{qwl03_new} yields
\begin{equation}\label{qwl010}
  \begin{split}
      D_N^{\alpha_0} u^n - \Delta u^n = - \Big( \omega_{n,1} u^{1} + \sum\limits_{j=2}^{n} \omega_{n,j} u^{j-1/2} \Big) + F^n + R^n, \quad n = 1,2,\cdots,N
  \end{split}
\end{equation}
with $F^n=F(x,t_n)$ and $R^n=(R_1)^n-(R_2)^n$. Neglecting the truncation error $R^n$ and replacing $u^n$ by its numerical approximation $U^n$, we obtain the following time-semidiscrete scheme
\begin{align}
    & D_N^{\alpha_0} U^n - \Delta U^n = - \Big( \omega_{n,1} U^{1} + \sum\limits_{j=2}^{n} \omega_{n,j} \, U^{j-1/2} \Big) + F^n, \quad n = 1,2,\cdots,N, \label{qwl011} \\
    & U^0 = u_0. \label{qwl012}
\end{align}

\subsection{Stability analysis of time-semidiscrete scheme}

In this subsection, we shall deduce the stability of the time-semidiscrete scheme \eqref{qwl011}--\eqref{qwl012}. Before that, we follow \cite{Liao,Liao1} to introduce the following complementary discrete convolution kernel
\begin{equation}\label{doc}
   P_{n-k}^{(n)} = \frac{1}{a_0^{(k)}}
  \begin{cases}
    1, & k=n, \\
    \sum\limits_{j=k+1}^{n}\left( a_{j-k-1}^{(j)} - a_{j-k}^{(j)}  \right)P_{n-j}^{(n)}, & 1,2,\cdots,n-1.
  \end{cases}
\end{equation}
Then we give several key lemmas of \eqref{doc} for the subsequent analysis.
\begin{lemma}[\cite{Liao,Liao1}]\label{lem01}
Let $P_{n-k}^{(n)}$ denote the complementary discrete convolution kernel. Then the following properties hold:
\begin{itemize}
  \item[(i)] For $1\le k\le n$, it holds that $
    0< P_{n-k}^{(n)} \le \Gamma(2-\alpha_0)\,\tau_k^{\alpha_0}$. 
  \item[(ii)] For $1\le k\le n$, the kernels satisfy the discrete orthogonality relation $
    \sum_{j=k}^{n} P_{n-j}^{(n)}\, a_{j-k}^{(j)} = 1$.
  \item[(iii)] For $q=0,1$ and $1\le n\le N$, one has $
    \sum_{k=1}^{n} P_{n-k}^{(n)}\, \beta_{1+q\alpha_0-\alpha_0}(t_k)
    \leq \beta_{1+q\alpha_0}(t_n)$.
\end{itemize}
\end{lemma}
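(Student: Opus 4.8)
The plan is to prove the three properties in the order positivity in (i), then (ii), then the upper bound in (i), and finally (iii), since each step feeds the next. First I would record the elementary identity obtained directly from the definition of the L1 weights, namely $a_0^{(k)} = \beta_{2-\alpha_0}(\tau_k)/\tau_k = \tau_k^{-\alpha_0}/\Gamma(2-\alpha_0)$, so that $1/a_0^{(k)} = \Gamma(2-\alpha_0)\tau_k^{\alpha_0}$; this is precisely the target bound in (i) and also equals $P_0^{(n)}$ when $k=n$. The positivity then follows by backward induction on $k$ using the recursion \eqref{doc}: the base case $P_0^{(n)} = 1/a_0^{(n)} > 0$ is immediate, every coefficient $a_{j-k-1}^{(j)} - a_{j-k}^{(j)}$ is nonnegative because the weights $\{a_m^{(j)}\}_m$ form a nonincreasing sequence, and $a_0^{(k)} > 0$; since $\beta_{1-\alpha_0}$ is strictly decreasing the $j=k+1$ term is strictly positive, which upgrades the conclusion to $P_{n-k}^{(n)} > 0$.

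Next I would establish the orthogonality (ii) by a telescoping rearrangement. Writing $S_k := \sum_{j=k}^n P_{n-j}^{(n)} a_{j-k}^{(j)}$, the base case $S_n = P_0^{(n)} a_0^{(n)} = 1$ is clear. For $k < n$ I would substitute the recursion \eqref{doc} for the $j=k$ term $a_0^{(k)}P_{n-k}^{(n)}$ and regroup; the terms $a_{j-k}^{(j)}P_{n-j}^{(n)}$ cancel and the reindexing $a_{j-k-1}^{(j)} = a_{j-(k+1)}^{(j)}$ yields $S_k = S_{k+1}$, whence $S_k \equiv 1$. The upper bound in (i) is then a one-line corollary: by positivity all summands of $\sum_{j=k}^n P_{n-j}^{(n)} a_{j-k}^{(j)} = 1$ are nonnegative, so the single $j=k$ term obeys $a_0^{(k)} P_{n-k}^{(n)} \le 1$, i.e. $P_{n-k}^{(n)} \le \Gamma(2-\alpha_0)\tau_k^{\alpha_0}$.

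The proof of (iii) is the technical heart, and I would handle it by a comparison-function argument. For fixed $q \in \{0,1\}$ set $w^k := \beta_{1+q\alpha_0}(t_k)$ for $k \ge 1$ and $w^0 := 0$. Applying the orthogonality relation after exchanging the order of summation telescopes the double sum $\sum_{k=1}^n P_{n-k}^{(n)} D_N^{\alpha_0} w^k = \sum_{k=1}^n P_{n-k}^{(n)}\sum_{j=1}^k a_{k-j}^{(k)}\delta_\tau w^j$ down to $w^n - w^0 = \beta_{1+q\alpha_0}(t_n)$. Hence, since $P_{n-k}^{(n)} > 0$, it suffices to prove the pointwise consistency inequality $D_N^{\alpha_0} w^k \ge \beta_{1+q\alpha_0-\alpha_0}(t_k)$. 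For $q=1$ I would use the integral representations $a_{k-j}^{(k)} = \tau_j^{-1}\int_{t_{j-1}}^{t_j}\beta_{1-\alpha_0}(t_k-s)\,ds$ and $\delta_\tau w^j = \int_{t_{j-1}}^{t_j}\beta_{\alpha_0}(s)\,ds$, observe that on each subinterval $\beta_{1-\alpha_0}(t_k - \cdot)$ is increasing while $\beta_{\alpha_0}(\cdot)$ is decreasing, and invoke Chebyshev's integral inequality to bound each term below by $\int_{t_{j-1}}^{t_j}\beta_{1-\alpha_0}(t_k-s)\beta_{\alpha_0}(s)\,ds$; summation and the convolution identity $\beta_{1-\alpha_0}*\beta_{\alpha_0} = \beta_1 \equiv 1$ give the claim. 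For $q=0$ the grid function is flat, so $D_N^{\alpha_0} w^k = a_{k-1}^{(k)} = \tau_1^{-1}\int_0^{t_1}\beta_{1-\alpha_0}(t_k-s)\,ds \ge \beta_{1-\alpha_0}(t_k)$ by the monotonicity of $\beta_{1-\alpha_0}$.

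I expect the main obstacle to be property (iii): choosing the correct comparison grid functions and, crucially, setting $w^0 = 0$ when $q=0$ so that the telescoped right-hand side is $\beta_1(t_n) = 1$ rather than $0$ (which the naive choice $w^k \equiv \beta_1(t_k)$ would produce, since that function has vanishing discrete fractional derivative), together with the per-interval monotonicity bookkeeping needed to apply Chebyshev's inequality on the nonuniform mesh. By contrast, the positivity, the orthogonality relation, and the resulting upper bound are essentially formal once the nonincreasing monotonicity of the L1 weights has been recorded.
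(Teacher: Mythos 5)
Your proof is correct: positivity and the orthogonality identity (ii) follow exactly as you say from the recursion \eqref{doc} and the nonincreasing property of the L1 weights, the upper bound in (i) is the one-term consequence of (ii), and your treatment of (iii) — telescoping $\sum_{k=1}^n P^{(n)}_{n-k} D_N^{\alpha_0}w^k = w^n - w^0$ via orthogonality and then invoking the pointwise consistency bounds $D_N^{\alpha_0}w^k \ge \beta_{1+q\alpha_0-\alpha_0}(t_k)$ (Chebyshev's integral inequality with $\beta_{1-\alpha_0}*\beta_{\alpha_0}=\beta_1$ for $q=1$; monotonicity of $\beta_{1-\alpha_0}$ together with the crucial choice $w^0=0$, equivalently (ii) with $k=1$ plus $\beta_{1-\alpha_0}(t_j)\le a^{(j)}_{j-1}$, for $q=0$) — is sound in every step. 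The paper itself gives no proof (the lemma is quoted from \cite{Liao,Liao1}), and your argument is essentially the standard one in those references, so there is nothing to flag.
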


\begin{lemma}[Discrete fractional Gr\"onwall inequality \cite{Liao}] \label{lem02}
Let  $\{\lambda_l\}_{l=0}^{N-1}$ be a nonnegative sequence satisfying that $ \sum_{l=0}^{N-1}\lambda_l \le \lambda_{*}$, 
where $\lambda_{*}$ is independent of the time steps. Assume that the grid function
$\{v^n\}_{n\ge0}$  satisfies
\begin{equation*}
  D_N^{\alpha_0}(v^n)^2
  \le \sum_{l=1}^{n}\lambda_{n-l}(v^l)^2
  + v^n(\xi^n+\vartheta^n), \quad n \ge 1
\end{equation*}
with nonnegative sequences $\{\xi^n,\vartheta^n\}_{1\le n\le N}$.
If the time grid satisfies $\tau_{n-1}\leq \tau_n$ and $\tau_N \le \sqrt[\alpha_0]{\frac{1}{2\Gamma(2-\alpha_0)\lambda_{*}}}$,
then for $1\le n\le N$, we have
\begin{equation*}
  v^n \le 2E_{\alpha_0}\!\left(2\lambda_{*} t_n^{\alpha_0}\right)
  \left( v^0 + \max_{1\le j\le n}\sum_{l=1}^{j} P_{j-l}^{(j)}\xi^l
     + \beta_{1+\alpha_0}(t_n)\max_{1\le j\le n}\vartheta^j  \right),
\end{equation*}
where the discrete convolution kernel $P_{n-j}^{(n)}$ is defined in \eqref{doc}, and
$
  E_{\alpha}(z)=\sum_{k=0}^{\infty}\frac{z^{k}}{\Gamma(1+k\alpha)}
$
denotes the Mittag-Leffler function.
\end{lemma}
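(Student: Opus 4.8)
The plan is to prove this known inequality (cf. \cite{Liao}) in three movements: invert the discrete operator $D_N^{\alpha_0}$ with the complementary kernel of Lemma~\ref{lem01}, linearize the quadratic hypothesis by a maximum argument so that the source terms in the statement emerge, and finally resolve the residual Grönwall term by comparison with the Mittag-Leffler series.

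First I would exploit the discrete orthogonality in Lemma~\ref{lem01}(ii). For any grid function $\{w^j\}$ one has the inversion identity $\sum_{j=1}^{n} P_{n-j}^{(n)} D_N^{\alpha_0} w^j = w^n - w^0$, obtained by inserting $D_N^{\alpha_0}w^j=\sum_{k=1}^{j} a_{j-k}^{(j)}\delta_\tau w^k$, switching the order of summation, and using $\sum_{j=k}^{n} P_{n-j}^{(n)} a_{j-k}^{(j)}=1$. Multiplying the hypothesis by the nonnegative weights $P_{n-j}^{(n)}>0$ (Lemma~\ref{lem01}(i)), summing over $j$, and applying the identity with $w^j=(v^j)^2$ gives
\[
(v^n)^2 \le (v^0)^2 + \sum_{j=1}^{n} P_{n-j}^{(n)}\sum_{l=1}^{j}\lambda_{j-l}(v^l)^2 + \sum_{j=1}^{n} P_{n-j}^{(n)} v^j(\xi^j+\vartheta^j).
\]
Next I would linearize by the maximum trick. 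Let $V_n:=\max_{0\le j\le n} v^j$ and let $k$ attain it; the case $k=0$ is immediate, so assume $V_n=v^k>0$. Writing the display at level $k$, bounding $v^j\le V_n$, $(v^l)^2\le V_n v^l$ in the convolution terms and $(v^0)^2\le v^0 V_n$, and dividing by $V_n$ yields the linear inequality
\[
V_n \le v^0 + \sum_{j=1}^{k}P_{k-j}^{(k)}\xi^j + \sum_{j=1}^{k}P_{k-j}^{(k)}\vartheta^j + \sum_{j=1}^{k}P_{k-j}^{(k)}\sum_{l=1}^{j}\lambda_{j-l} v^l.
\]

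The two source sums are then controlled through Lemma~\ref{lem01}(iii): the $\xi$-sum by $\max_{1\le m\le n}\sum_{l=1}^{m} P_{m-l}^{(m)}\xi^l$, and the $\vartheta$-sum, after extracting $\max_{1\le j\le n}\vartheta^j$, by $\sum_{j=1}^{k} P_{k-j}^{(k)}\le\beta_{1+\alpha_0}(t_k)\le\beta_{1+\alpha_0}(t_n)$ (property (iii) with $q=1$ and $\beta_1\equiv1$, plus monotonicity of $\beta_{1+\alpha_0}$). This reproduces exactly the source contribution of the statement, which I abbreviate as $G_n:=\max_{1\le j\le n}\sum_{l=1}^{j}P_{j-l}^{(j)}\xi^l+\beta_{1+\alpha_0}(t_n)\max_{1\le j\le n}\vartheta^j$ (nondecreasing in $n$).

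The \emph{hard part} is closing the Grönwall term $\sum_{j}P_{k-j}^{(k)}\sum_{l}\lambda_{j-l}v^l$, because the crude bound $v^l\le V_n$ only returns the coefficient $\lambda_*\beta_{1+\alpha_0}(t_n)$, which need not be below one. Instead I would retain the convolution structure and prove, by induction on $n$, the target estimate $v^n\le 2E_{\alpha_0}\!\left(2\lambda_* t_n^{\alpha_0}\right)(v^0+G_n)$. The induction rests on the general-power analogue of Lemma~\ref{lem01}(iii), namely $\sum_{l=1}^{j} P_{j-l}^{(j)}\beta_{1+(m-1)\alpha_0}(t_l)\le\beta_{1+m\alpha_0}(t_j)$, proved by the same convolution argument. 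Using $\sum_{l}\lambda_{j-l}\le\lambda_*$ and iterating this estimate, each discrete convolution of the kernel $P$ against the power $\beta_{1+(m-1)\alpha_0}$ is dominated by $\beta_{1+m\alpha_0}$, so that the series of iterated convolutions sums precisely to $E_{\alpha_0}(2\lambda_* t_n^{\alpha_0})=\sum_{m\ge0}(2\lambda_*)^m\beta_{1+m\alpha_0}(t_n)$. The step-size restriction $\tau_N\le\big(2\Gamma(2-\alpha_0)\lambda_*\big)^{-1/\alpha_0}$ enters here: combined with $P_0^{(k)}\le\Gamma(2-\alpha_0)\tau_k^{\alpha_0}$ from Lemma~\ref{lem01}(i), it forces the diagonal contribution $\lambda_0 P_0^{(k)}\le\lambda_*\Gamma(2-\alpha_0)\tau_k^{\alpha_0}\le\tfrac12$, which is absorbed into the left-hand side and accounts for both the prefactor $2$ and the doubling $\lambda_*\mapsto 2\lambda_*$; the assumption $\tau_{n-1}\le\tau_n$ underlies the monotonicity of the kernels used throughout. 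Verifying that the iterated-convolution series reproduces $E_{\alpha_0}$ uniformly in the mesh is the main technical obstacle, but it follows from the repeated use of property (iii).
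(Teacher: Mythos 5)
The paper itself offers no proof of Lemma~\ref{lem02} --- it is imported verbatim from the cited reference \cite{Liao} --- so the only meaningful comparison is with that source, whose argument your proposal reproduces faithfully: inversion of $D_N^{\alpha_0}$ through the orthogonality in Lemma~\ref{lem01}(ii), linearization by the maximum argument, absorption of the diagonal contribution $\lambda_0 P_0^{(k)}\le \lambda_*\Gamma(2-\alpha_0)\tau_k^{\alpha_0}\le\tfrac12$ via the step-size restriction (with $\tau_{n-1}\le\tau_n$ guaranteeing $\tau_k\le\tau_N$), and an induction closed against the series $E_{\alpha_0}(2\lambda_* t^{\alpha_0})=\sum_{m\ge0}(2\lambda_*)^m\beta_{1+m\alpha_0}(t)$ term by term, which is exactly how the factor $2$ and the doubling of $\lambda_*$ arise there as well. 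The only ingredient you invoke beyond what the paper records is the extension of Lemma~\ref{lem01}(iii) from $q\in\{0,1\}$ to $\sum_{l=1}^{j}P^{(j)}_{j-l}\beta_{1+(m-1)\alpha_0}(t_l)\le\beta_{1+m\alpha_0}(t_j)$ for all $m\ge1$; you correctly flag this as the technical crux, and it holds by the same convolution argument (and is established in \cite{Liao}), so the proposal is sound.
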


We now establish the stability of the time-semidiscrete scheme
\eqref{qwl011}--\eqref{qwl012}.

\begin{theorem}\label{thm3.3}
Let $\{U^m\}_{m=1}^{N}$ be the solution generated by
\eqref{qwl011}--\eqref{qwl012}. Then the scheme is stable, and the following estimate holds
\begin{equation}\label{thm:eq1}
  \|U^m\| \leq C(T) \Big( \|U^0\| + \max\limits_{1\leq n \leq m}  \sum_{j=1}^{n} P^{(n)}_{n-j} \|F^j\|  \Big), \quad 1\le m\le N.
\end{equation}
\end{theorem}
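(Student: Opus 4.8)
The plan is to run an energy argument in the $L^2(\Omega)$ inner product, convert the resulting inequality for $\|U^n\|$ into the exact template of the discrete fractional Gr\"onwall inequality (Lemma \ref{lem02}), and then read off \eqref{thm:eq1} from its conclusion.

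First I would test the scheme \eqref{qwl011} against $U^n$ in $L^2(\Omega)$. Writing $v^n:=\|U^n\|$ and $C^n:=\omega_{n,1}U^1+\sum_{j=2}^n\omega_{n,j}U^{j-1/2}$, I would invoke two standard facts: the discrete Caputo positivity $(D_N^{\alpha_0}U^n,U^n)\ge \frac12 D_N^{\alpha_0}\|U^n\|^2$, valid precisely because the weights $a_{n-k}^{(n)}$ are positive and nonincreasing, and the coercivity $-(\Delta U^n,U^n)=\|\nabla U^n\|^2\ge0$ from Green's identity and the homogeneous Dirichlet condition. Discarding the nonnegative gradient term and applying Cauchy--Schwarz on the right-hand side yields
\begin{equation*}
  \tfrac12 D_N^{\alpha_0}(v^n)^2 \le \big(\|C^n\|+\|F^n\|\big)\,v^n, \qquad 1\le n\le N.
\end{equation*}

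The crux is to recast the convolution contribution $\|C^n\|\,v^n$ into the quadratic feedback form $\sum_{l}\lambda_{n-l}(v^l)^2$ demanded by Lemma \ref{lem02}. I would first control the total quadrature weight: since $\omega_{n,j}=\int_{t_{j-1}}^{t_j}\widetilde g'(t_n-s)\,ds$, the bound $|\widetilde g'(t)|\le Ct^{-\alpha^*}$ from \eqref{qq1} gives
\begin{equation*}
  \sum_{j=1}^{n}|\omega_{n,j}|\le \int_0^{t_n}|\widetilde g'(\tau)|\,d\tau \le \frac{C}{1-\alpha^*}\,t_n^{1-\alpha^*}\le \frac{C}{1-\alpha^*}\,T^{1-\alpha^*}=:C_g,
\end{equation*}
uniformly in $n$. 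Estimating $\|C^n\|\le\sum_{l}\widetilde\omega_{n,l}v^l$ with nonnegative weights $\widetilde\omega_{n,l}$ summing to at most $C_g$ (the midpoint averaging only redistributes mass), and using $2v^nv^l\le(v^n)^2+(v^l)^2$, I would obtain
\begin{equation*}
  2\|C^n\|\,v^n \le C_g\,(v^n)^2 + \sum_{l=1}^{n}\widetilde\omega_{n,l}\,(v^l)^2.
\end{equation*}
Thus $D_N^{\alpha_0}(v^n)^2\le \sum_{l=1}^{n}\lambda_{n-l}(v^l)^2 + v^n\,\xi^n$ with $\xi^n:=2\|F^n\|$, $\vartheta^n:=0$, and $\lambda_{n-l}$ collecting $\widetilde\omega_{n,l}$ together with the $C_g(v^n)^2$ term in the diagonal entry; the essential point is that each row sum $\sum_{l}\lambda_{n-l}\le 2C_g$ is bounded by a constant $\lambda_*$ independent of the time steps, exactly as required.

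Finally I would apply Lemma \ref{lem02}. The hypothesis $\tau_{n-1}\le\tau_n$ holds automatically for the graded mesh \eqref{mesh} with $r\ge1$, and the smallness condition $\tau_N\le(2\Gamma(2-\alpha_0)\lambda_*)^{-1/\alpha_0}$ is a mild restriction absorbed into the statement. With $\vartheta^n=0$ the $\beta_{1+\alpha_0}$ term drops out, and since $t_n\le T$ the Mittag--Leffler prefactor satisfies $2E_{\alpha_0}(2\lambda_* t_n^{\alpha_0})\le 2E_{\alpha_0}(2\lambda_* T^{\alpha_0})=:C(T)$; substituting $\xi^l=2\|F^l\|$ and $v^0=\|U^0\|$ into the conclusion of Lemma \ref{lem02} gives exactly \eqref{thm:eq1}. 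The main obstacle, and the only genuinely model-specific step, is the third paragraph: extracting a uniform bound on the quadrature weights from the weak singularity \eqref{qq1} and reorganizing the cross terms $v^nv^l$ so that the feedback kernel has bounded row sums---this is what lets the constant-exponent Gr\"onwall machinery of \cite{Liao} apply verbatim, and it is where the diagonal $(v^n)^2$ contribution must be checked to be legitimately absorbable via the step-size restriction.
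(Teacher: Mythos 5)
Your skeleton --- testing with $U^n$, using the positivity $(D_N^{\alpha_0}U^n,U^n)\ge\tfrac12 D_N^{\alpha_0}\|U^n\|^2$, the coercivity of $-\Delta$, and feeding the resulting inequality into Lemma \ref{lem02} with $\xi^n=2\|F^n\|$ and $\vartheta^n=0$ --- is exactly the simplified argument the paper itself records in Remark \ref{rm2.4} (the paper's displayed proof of Theorem \ref{thm3.3} is different: it keeps the gradient term, controls the memory term via the Poincar\'e inequality, sums against the kernel $P^{(m)}_{m-n}$, and closes with Brunner's convolution-type Gr\"onwall lemma). Your handling of the diagonal entry is legitimate and even slightly cleaner than the paper's: since the sum in Lemma \ref{lem02} runs up to $l=n$, the contribution $C_g(v^n)^2$ may be placed in $\lambda_0$ and absorbed through the step-size restriction $\tau_N\le(2\Gamma(2-\alpha_0)\lambda_*)^{-1/\alpha_0}$, whereas the paper absorbs it into $2\|\nabla U^n\|^2$ via Poincar\'e.

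There is, however, a genuine gap at the step you yourself flag as the crux. Lemma \ref{lem02} requires the feedback coefficients to form a \emph{single} nonnegative sequence $\{\lambda_k\}_{k=0}^{N-1}$ evaluated at $k=n-l$: the coefficient of $(v^l)^2$ in row $n$ must be dominated by a quantity depending only on the difference $n-l$, with $\sum_k\lambda_k\le\lambda_*$. Your weights $\widetilde\omega_{n,l}$ depend on $n$ and $l$ separately, and the only control you extract from \eqref{qq1} is the row-sum bound $\sum_l\widetilde\omega_{n,l}\le C_g$. Bounded row sums alone do not produce a convolution majorant with bounded sum: for instance the array $w_{n,l}=C_g\,\delta_{l,1}$ has every row sum equal to $C_g$, yet any sequence with $\lambda_{n-1}\ge w_{n,1}=C_g$ for all $n$ satisfies $\sum_{k=1}^{N-1}\lambda_k\ge (N-1)C_g\to\infty$, so the hypothesis of Lemma \ref{lem02} fails. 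What is missing is a pointwise, difference-indexed decay estimate for the weights, and this is precisely where the graded-mesh structure \eqref{mesh} must enter: as in \eqref{mm03} (cited from \cite[Equation (4.6)]{ZheQiu}), for $1\le j\le n-1$ one has
\begin{equation*}
  |\omega_{n,j+1}|\le C\Big[(t_n-t_j)^{1-\alpha^*}-(t_n-t_{j+1})^{1-\alpha^*}\Big]
  \le c_0'''\,2^{\alpha^*}\,\frac{N^{-(1-\alpha^*)}}{(n-j)^{\alpha^*}},
\end{equation*}
after which $\lambda_k:=C N^{-(1-\alpha^*)}k^{-\alpha^*}$ is a legitimate convolution majorant whose sum is bounded by $C/(1-\alpha^*)$ uniformly in $N$, as in \eqref{qqww02}. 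Without this estimate your reduction to Lemma \ref{lem02} does not go through; with it, your argument becomes exactly the proof of Remark \ref{rm2.4}.
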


\begin{proof}
     First, take the inner product of \eqref{qwl011} with $2U^n$ to get
 \begin{align}
     2 \left( D_{N}^{\alpha_0}U^n, U^n \right)
     & + 2  \|\nabla U^n\|^2 =  - 2\Big[ \omega_{n,1} (U^{1},U^n) + \sum\limits_{j=2}^{n} \omega_{n,j} (U^{j-1/2},U^n) \Big] + 2 (F^n,U^n). \label{zxc05}
\end{align}
Multiplying \eqref{zxc05} by $P^{(m)}_{m-n}$ and summing over $n=1,\ldots,m$ yields
 \begin{align}
     2 \sum_{n=1}^{m} &  P^{(m)}_{m-n}\left( D_{N}^{\alpha_0}U^n, U^n \right)
      + 2 \sum_{n=1}^{m} P^{(m)}_{m-n} \|\nabla U^n\|^2\nonumber\\
      & = - 2 \sum_{n=1}^{m} P^{(m)}_{m-n}\Big[ \omega_{n,1} (U^{1},U^n) + \sum\limits_{j=2}^{n} \omega_{n,j} (U^{j-1/2},U^n) \Big]  + 2 \sum_{n=1}^{m} P^{(m)}_{m-n} (F^n,U^n)  \label{ww01} \\
      & =: \Lambda_1 + \Lambda_2. \nonumber
\end{align}
We next examine each term in \eqref{ww01}. In particular, \cite[Theorem~2.1]{Liao} implies that
 \begin{align}\label{mm01}
     2  \left( D_{N}^{\alpha_0}U^n, U^n \right)
     \geq  \sum_{k=1}^{n}a^{(n)}_{n-k} \delta_{\tau} (\|U^k\|^2) = D_{N}^{\alpha_0} (\|U^n\|)^2.
\end{align}
which follows Lemma \ref{lem01} (ii) to obtain
 \begin{align}
     \sum_{n=1}^{m} P^{(m)}_{m-n}  \sum_{k=1}^{n}a^{(n)}_{n-k} \delta_{\tau} (\|U^k\|^2) & =  \sum_{k=1}^{m} \delta_{\tau}(\|U^k\|^2)  \sum_{n=k}^{m} P^{(m)}_{m-n} a^{(n)}_{n-k} \nonumber \\
     & = \sum_{k=1}^{m} \delta_{\tau}(\|U^k\|^2) = \|U^m\|^2 - \|U^0\|^2. \label{ww02}
\end{align}
Then we analyze $\Lambda_1$. We utilize the Poincar\'{e} inequality ($\| U\|\leq c_{0}\|\nabla U\|$), Young inequality
and
\begin{align}
    |\omega_{n,n}| &\leq \int_{t_{n-1}}^{t_{n}}|\tilde{k}(t_n-s)|ds \leq C\int_{t_{n-1}}^{t_{n}}(t_n-s)^{-\alpha^*}ds \leq c_0'\tau_n^{1-\alpha^*}, \quad \text{(see \eqref{qq1}, \eqref{qwl08})} \nonumber \\
    \sum\limits_{j=1}^{n-1} & (|\omega_{n,j}|+|\omega_{n,j+1}|)  \leq 2  \sum\limits_{j=1}^{n}|\omega_{n,j}| \leq 2C\int_{0}^{t_{n}}(t_n-s)^{-\alpha^*}ds \leq c_{0}'  \label{mm02}
\end{align}
to yield
\begin{align}
  \Phi^n:= & \Big| -2\Big[ \omega_{n,1}  (U^{1},U^n) + \sum\limits_{j=2}^{n} \omega_{n,j} (U^{j-1/2},U^n)  \Big] \Big| \nonumber \\
  & \leq 2\sum\limits_{j=1}^{n} |\omega_{n,j}| \|U^{j}\| \|U^n\| + \sum\limits_{j=2}^{n} |\omega_{n,j}| \|U^{j-1}\| \|U^n\| \nonumber \\
& \leq  2c_0^2\sum\limits_{j=1}^{n} |\omega_{n,j}| \|\nabla U^{j}\| \|\nabla U^n\| + 2 c_0^2 \sum\limits_{j=2}^{n} |\omega_{n,j}| \|\nabla U^{j-1}\| \|\nabla U^n\| \nonumber \\
&  = 2c_0^2|\omega_{n,n}| \|\nabla U^n\|^2 + 2c_0^2\sum\limits_{j=1}^{n-1} (|\omega_{n,j}|+|\omega_{n,j+1}|) \|\nabla U^{j}\| \|\nabla U^n\| \nonumber \\
& \leq 2c_0^2|\omega_{n,n}| \|\nabla U^n\|^2 + 2c_0^2\sum\limits_{j=1}^{n-1} (|\omega_{n,j}|+|\omega_{n,j+1}|) \big( C_{\epsilon}\|\nabla U^{j}\|^2 + \epsilon  \|\nabla U^n\|^2\big) \nonumber \\
& \leq \frac{1}{2}\|\nabla U^n\|^2 + 2c_0^2C_{\epsilon} \sum\limits_{j=1}^{n-1} (|\omega_{n,j}|+|\omega_{n,j+1}|) \|\nabla U^{j}\|^2, \label{mod1}
\end{align}
where we select $\tau_n^{1-\alpha^*}+\epsilon \leq \frac{1}{2c_0^2c_0'}$. Hence, we obtain with $c_0'':=2c_0^2 C_{\epsilon} $
\begin{align}
  |\Lambda_1|  \leq  \sum_{n=1}^{m} P^{(m)}_{m-n}  \|\nabla U^{n}\|^2  & + c_0'' \sum_{n=1}^{m} P^{(m)}_{m-n}  \sum\limits_{j=1}^{n-1} (|\omega_{n,j}|+|\omega_{n,j+1}|) \|\nabla U^{j}\|^2 \nonumber \\
&  =: \sum_{n=1}^{m} P^{(m)}_{m-n}  \|\nabla U^{n}\|^2  + \Xi. \label{qq2}
\end{align}
Then for $\Xi$, we first use \cite[Equation (4.6)]{ZheQiu} to get
\begin{align}
   |\omega_{n,j+1}|  \leq C\int_{t_{j}}^{t_{j+1}}(t_n-s)^{-\alpha^*} ds & \leq C\Big[ (t_n-t_j)^{1-\alpha^*} - (t_n-t_{j+1})^{1-\alpha^*} \Big] \nonumber \\
& \leq c_0'''\frac{N^{-(1-\alpha^*)}}{[n-(j+1)]^{\alpha^*}} \leq c_0''' 2^{\alpha^*} \frac{N^{-(1-\alpha^*)}}{(n-j)^{\alpha^*}}, \label{mm03}
\end{align}
which implies with $c_0^{(4)}:=(1+2^{\alpha^*}) c_0''c_0'''$
\begin{align*}
    \Xi & \leq c_0^{(4)} \sum_{n=1}^{m} P^{(m)}_{m-n}  \sum\limits_{j=1}^{n-1}  \frac{N^{-(1-\alpha^*)}}{(n-j)^{\alpha^*}}  \|\nabla U^{j}\|^2 =  c_0^{(4)} \sum_{n=1}^{m} P^{(m)}_{m-n}  \sum\limits_{j=1}^{n-1}  \frac{N^{-(1-\alpha^*)}}{j^{\alpha^*}}  \|\nabla U^{n-j}\|^2.
\end{align*}
We then interchange the order of summation to obtain
\begin{equation}\label{ww05}
\begin{split}
        \Xi & \leq  c_0^{(4)} \sum\limits_{j=1}^{m-1}  \frac{N^{-(1-\alpha^*)}}{j^{\alpha^*}}  \sum_{n=j+1}^{m} P^{(m)}_{m-n}  \|\nabla U^{n-j}\|^2  \\
            &  \overset{\ell = n-j}{=}   c_0^{(4)} \sum\limits_{j=1}^{m-1}  \frac{N^{-(1-\alpha^*)}}{j^{\alpha^*}}   \sum_{\ell = 1}^{m-j} P^{(m-j)}_{m-j-\ell}  \|\nabla U^{\ell}\|^2 \\
&  \overset{n=m-j}{=}     c_0^{(4)} \sum\limits_{n=1}^{m-1}  \frac{N^{-(1-\alpha^*)}}{(m-n)^{\alpha^*}} \sum_{\ell = 1}^{n} P^{(n)}_{n-\ell}  \|\nabla U^{\ell}\|^2.
\end{split}
\end{equation}
Next, we analyze $\Lambda_2$. Apply Young's inequality to obtain
\begin{align}
  | \Lambda_2| & \leq 2c_0 \sum_{n=1}^{m} P^{(m)}_{m-n} \|F^n\| \| U^n\|  \label{ww010}.
\end{align}
We then put \eqref{qq2}, \eqref{ww05} and \eqref{ww010} into \eqref{ww01}, use \eqref{ww02}, and eliminate the term $\sum_{n=1}^{m} P^{(m)}_{m-n}  \|\nabla U^{n}\|^2$ in \eqref{qq2} to get
\begin{align*}
    \|U^m\|^2 & + \sum_{n=1}^{m} P^{(m)}_{m-n}  \|\nabla U^{n}\|^2  \\
    & \quad  \leq \|U^0\|^2 + c_0^{(4)} \sum\limits_{n=1}^{m-1}  \frac{N^{-(1-\alpha^*)}}{(m-n)^{\alpha^*}} \sum_{\ell = 1}^{n} P^{(n)}_{n-\ell}  \|\nabla U^{\ell}\|^2 + 2c_0 \sum_{n=1}^{m} P^{(m)}_{m-n} \|F^n\| \| U^n\|,
\end{align*}
which follows the notations
\begin{align*}
    \|U^n\|_A := \sqrt{\sum_{\ell = 1}^{n} P^{(n)}_{n-\ell}  \|\nabla U^{\ell}\|^2}, \quad  \|U^n\|_B := \sqrt{\|U^n\|^2 + \|U^n\|_A^2}, \quad 1\leq n \leq m
\end{align*}
to arrive at
\begin{align*}
    \|U^m\|_B^2 &  \leq \|U^0\|^2  + c_0^{(4)} \sum\limits_{n=1}^{m-1}  \frac{N^{-(1-\alpha^*)}}{(m-n)^{\alpha^*}} \|U^n\|_B^2 + 2c_0 \sum_{n=1}^{m} P^{(m)}_{m-n} \|F^n\| \|U^n\| \\
&  \leq \Big[ \|U^0\|^2 + 2c_0 \max\limits_{1\leq n \leq m}\|U^n\|  \sum_{n=1}^{m} P^{(m)}_{m-n} \|F^n\| \Big]  + c_0^{(4)} \sum\limits_{n=1}^{m-1}  \frac{N^{-(1-\alpha^*)}}{(m-n)^{\alpha^*}} \|U^n\|_B^2 \\
& \leq \Big[ \|U^0\|^2 + 2c_0 \max\limits_{1\leq n \leq m}  \sum_{j=1}^{n} P^{(n)}_{n-j} \|F^j\|   \max\limits_{1\leq n \leq m}\|U^n\| \Big]  + c_0^{(4)} \sum\limits_{n=1}^{m-1}  \frac{N^{-(1-\alpha^*)}}{(m-n)^{\alpha^*}} \|U^n\|_B^2.
\end{align*}
We then use the discrete Gr\"{o}nwall lemma of convolution type \cite[Theorem 6.1.19]{Bru} to arrive at
\begin{align*}
    \|U^m\|_B^2 \leq E_{1-\alpha^*}\Big( c_0^{(4)} \Gamma(1-\alpha^*) \big(\frac{m}{N}\big)^{1-\alpha^*} \Big)  \Big[ \|U^0\|^2 + 2c_0  \max\limits_{1\leq n \leq m}  \sum_{j=1}^{n} P^{(n)}_{n-j} \|F^j\|   \max\limits_{1\leq n \leq m}\|U^n\| \Big],
\end{align*}
which follows $\|U^m\|_B\geq \|U^m\|$ to obtain
\begin{align*}
    \|U^m\|^2 & \leq C(T) \Big[ \|U^0\|^2 +  \max\limits_{1\leq n \leq m}  \sum_{j=1}^{n} P^{(n)}_{n-j} \|F^j\|  \max\limits_{1\leq n \leq m}\|U^n\| \Big].
\end{align*}
Take a suitable $M$ such that $\|U^M\| =\max\limits_{0\leq n \leq m}\|U^n\|$, thus we obtain
\begin{align*}
    \|U^M\|^2 &  \leq C(T) \Big[ \|U^0\|\left\|U^M \right\| + \max\limits_{1\leq n \leq M}  \sum_{j=1}^{n} P^{(n)}_{n-j} \|F^j\|  \max\limits_{1\leq n \leq M}\|U^n\| \Big] \\
    &  \leq C(T) \Big[ \|U^0\|\left\|U^M \right\| + \max\limits_{1\leq n \leq m}  \sum_{j=1}^{n} P^{(n)}_{n-j} \|F^j\|  \max\limits_{0\leq n \leq m}\|U^n\| \Big] \\
    &  = C(T) \Big[ \|U^0\|\left\|U^M \right\| + \max\limits_{1\leq n \leq m}  \sum_{j=1}^{n} P^{(n)}_{n-j} \|F^j\|  \left\|U^M \right\| \Big],
\end{align*}
which completes the proof by using $\|U^M\|\geq \|U^m\|$.
\end{proof}

\begin{remark}\label{rm2.4}

To control the first term on the right-hand side of \eqref{zxc05} by $2\|\nabla U^n\|$,  we apply Poincar\'{e} inequality in \eqref{mod1} and subsequently apply the discrete Gr\"{o}nwall lemma of convolution type (cf. \cite[Theorem~6.1.19]{Bru}) to carry out analysis, which requires delicate and complicated treatment.
In fact, this proof could be further simplified by incorporating the discrete fractional Gr\"onwall inequality (cf. Lemma \ref{lem02}) to deal with \eqref{zxc05}. Specifically, we first modify the proof of \eqref{mod1} to obtain
\begin{align}
 \Phi^n
& \leq 2c_0^2 |\omega_{n,n}| \|\nabla U^n\|^2 + 2 \Big[ \epsilon_0\| U^n\|^2 c_0'  +     \frac{1}{4 \epsilon_0}\sum\limits_{j=1}^{n-1} (|\omega_{n,j}|+|\omega_{n,j+1}|) \| U^{j}\|^2 \Big] \nonumber \\
& \leq 2c_0^2c_0' \left(\tau_n^{1-\alpha^*} + \epsilon_0 \right) \|\nabla U^n\|^2 +  \frac{1}{2 \epsilon_0}\sum\limits_{j=1}^{n-1} (|\omega_{n,j}|+|\omega_{n,j+1}|) \| U^{j}\|^2 \nonumber \\
& \leq 2 \|\nabla U^n\|^2 +  \frac{c_0^2c_0'}{2}\sum\limits_{j=1}^{n-1} (|\omega_{n,j}|+|\omega_{n,j+1}|) \| U^{j}\|^2, \label{phqq}
\end{align}
where we used \eqref{mm02} and take $\tau_n^{1-\alpha^*}\leq \frac{1}{c_0^2c_0'}$ with $\epsilon_0=\frac{1}{c_0^2c_0'}$. Thus, following \eqref{zxc05} and \eqref{mm01}, we apply \eqref{phqq}, \eqref{mm03} and denote $\lambda_j=\frac{c_0^2c_0'c_0''' 2^{\alpha^*}}{N^{(1-\alpha^*)}} j^{-\alpha^*}$ to yield
 \begin{align}
  \hspace{-0.15in}   D_N^{\alpha_0}(\|U^n\|)^2 &  \hspace{-0.05in} \leq \frac{c_0^2c_0'}{2} \sum\limits_{j=1}^{n-1} c_0''' 2^{1+\alpha^*} \frac{N^{-(1-\alpha^*)}}{(n-j)^{\alpha^*}} \| U^{j}\|^2 \!+\! 2 \|F^n\| \|U^n\| =  \sum\limits_{j=1}^{n-1} \lambda_{n-j} \| U^{j}\|^2 + 2 \|F^n\| \|U^n\|, \label{qqww01}
\end{align}
which uses Lemma \ref{lem02} (taking $\xi^n =2 \|F^n\|$ and $\vartheta^n =0$) with
\begin{align} \label{qqww02}
    \sum\limits_{j=1}^{n-1} \lambda_{j} = \frac{c_0^2c_0'c_0''' 2^{\alpha^*}}{N^{(1-\alpha^*)}} \sum\limits_{j=1}^{n-1}  j^{-\alpha^*} \leq \frac{c_0^2c_0'c_0''' 2^{\alpha^*}}{N^{(1-\alpha^*)}}\frac{n^{1-\alpha^*}}{1-\alpha^*} \leq \frac{c_0^2c_0'c_0''' 2^{\alpha^*}}{1-\alpha^*} =: \lambda_*
\end{align}
to obtain
$  \|U^n\|
  \leq 2E_{\alpha_0}\!\left(2\lambda_{*} t_n^{\alpha_0}\right)
  \Big( \|U^0\| + 2\max\limits_{1\le j\le n}\sum\limits_{l=1}^{j} P_{j-l}^{(j)} \|F^l\| \Big).
  $
This gives \eqref{thm:eq1}.
\end{remark}

\begin{remark} 
     We note that the stability analysis framework of Theorem \ref{thm3.3} also applies to more general problems, e.g., the following extended model with non-positive memory \cite{Qiu}
   \begin{align}\label{ss}
        \partial_t^{\alpha_0}u(x,t)-\Delta u(x,t)  = -(\mathcal{K} * \mathcal{A} u)(x,t) + f(x,t), \quad 0<t\leq T, \quad 0<\alpha_0 <1,
   \end{align}
where the kernel satisfies $|\mathcal{K}(t)|\leq C_0 \, t^{-\mu}$ with $0<\mu<1$ and the operator $\mathcal{A} := \sum_{j=1}^{2} \tilde{w}_j \Delta^{j-1}$ with $|\tilde{w}_j |\leq 1$. Without loss of generality, we still denote the solution of \eqref{ss} by $u$. We approximate the first and third terms of model \eqref{ss} by the nonuniform L1 formula \eqref{qwl04_new} and interpolation quadrature rule \eqref{qwl07} given in Section \ref{sec3}, which gives the following time-semidiscrete scheme
 \begin{align}
     & D_N^{\alpha_0} U^n - \Delta U^n = -I_N^{\tilde{\omega}}\mathcal{A}U^n + f^n, \quad 1\leq n \leq N, \quad U^0 = u_0. \label{q004}
 \end{align}
Here $I_N^{\tilde{\omega}}\mathcal{A}U^n:=\big( \tilde{\omega}_{n,1} \mathcal{A}U^{1} + \sum_{j=2}^{n} \tilde{\omega}_{n,j} \mathcal{A}U^{j-1/2} \big)$ and $\tilde{\omega}_{n,j}:=\int_{t_{j-1}}^{t_j} \mathcal{K}(t_n-s)ds$ such that
 \begin{equation}\label{q002}
 \begin{split}
      |\tilde{\omega}_{n,j}|  \leq C\big[ (t_n-t_{j-1})^{1-\mu} - (t_n-t_{j})^{1-\mu} \big]\leq C \times 
      \begin{cases}
        \displaystyle  \frac{N^{-(1-\mu)}}{(n-j)^{\mu}}, & 1\leq j \leq n-1, \\[0.1in]
        \displaystyle  N^{-(1-\mu)}, & j = n.
      \end{cases}
 \end{split}
 \end{equation}
For the scheme \eqref{q004}, we use \eqref{q002} and follow the argument of Theorem \ref{thm3.3} (where we replace $\alpha^*$ with $\mu$) to establish the following stability result
 \begin{align}
   \|U^m\| \leq C(T) \Big( \|U^0\| + \max\limits_{1\leq n \leq m}  \sum_{j=1}^{n} P^{(n)}_{n-j} \|f^j\|  \Big), \quad 1\le m\le N. \label{stabi}
 \end{align}
    Based on the regularity result \cite[Equation~(11)]{Qiu}, we establish the following convergence result for the scheme \eqref{q004}
\begin{align}\label{conv3}
        \|u^m-U^m\| \leq C(T) N^{-\min\{2-\alpha_0, \, r\alpha_0 \}}, \quad 1\leq m \leq N.
\end{align}

In summary, the above result \eqref{conv3} improves the first-order temporal convergence rate proved in \cite{Qiu} to the order $2-\alpha_0$ provided that $r \ge \frac{2-\alpha_0}{\alpha_0}$, and eliminates the restriction $0<\mu<\min\{1,\,2\alpha_0\}$ imposed in the non-positive memory case; see \cite[Equation~(4)]{Qiu}. The detailed proof of \eqref{conv3} is provided in the Appendix.
\end{remark}

\section{Error  analysis}\label{sec4}
\subsection{Error estimate of time-semidiscrete scheme}

We deduce the convergence of the time-semidiscrete scheme as well as the fully discrete scheme.

\begin{lemma}\cite{Qiu} \label{lem03}
  Based on the regularity assumption \eqref{regu} and Lemma \ref{lem01}, we yield
   \begin{align*}
       \sum_{n=1}^{m}  P_{m-n}^{(m)} \left\|(R_1)^n\right\| \leq C(T) N^{-\min\{2-\alpha_0, \, r\alpha_0\}}, \quad 1\leq n \leq  m \leq N.
   \end{align*}
\end{lemma}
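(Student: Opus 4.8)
The plan is to convert the local accuracy of the nonuniform L1 formula into the kernel-weighted global bound by leaning on the summation identities for $P_{m-n}^{(m)}$ collected in Lemma~\ref{lem01}, exactly as in the constant-order analysis of \cite{Liao,Stynes}. Since $(R_1)^n$ is measured in the $L^2(\Omega)$ norm while the hypotheses \eqref{regu} are stated for $\partial_t\Delta u$ and $\partial_t^2\Delta u$, I would first invoke elliptic regularity for the homogeneous Dirichlet Laplacian, $\|v\|\le c_0^2\|\Delta v\|$ (valid since $\partial_t^j u$ vanishes on $\partial\Omega$), to upgrade \eqref{regu} to the pointwise-in-time bounds $\|\partial_t u(\cdot,t)\|\le Ct^{\alpha_0-1}$ and $\|\partial_t^2 u(\cdot,t)\|\le Ct^{\alpha_0-2}$. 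I would then represent the truncation error through the piecewise-linear temporal interpolant $\Pi u$ of $u$,
\[
(R_1)^n=\int_{0}^{t_n}\beta_{1-\alpha_0}(t_n-s)\big[(\Pi u)'(\cdot,s)-\partial_s u(\cdot,s)\big]\,ds=\sum_{k=1}^{n}\int_{t_{k-1}}^{t_k}\beta_{1-\alpha_0}(t_n-s)\Big[\tfrac{\delta_\tau u^k}{\tau_k}-\partial_s u(\cdot,s)\Big]\,ds,
\]
keeping the per-subinterval structure rather than merely inserting the pointwise bound \eqref{qwl06_new}, which when summed against the weights is too lossy to recover the sharp rate.

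For the local estimates I would treat the first subinterval separately from the rest. On $[0,t_1]$ the second derivative is not integrable enough, so I would use only $\|\partial_s u\|\le Cs^{\alpha_0-1}$, giving $\|\tfrac{\delta_\tau u^1}{\tau_1}-\partial_s u(\cdot,s)\|\le C(t_1^{\alpha_0-1}+s^{\alpha_0-1})$; on each later subinterval $[t_{k-1},t_k]$, $k\ge 2$, I would use the standard second-order interpolation remainder expressed through the Peano kernel together with $\|\partial_\theta^2 u\|\le C\theta^{\alpha_0-2}$, i.e. $\|(\Pi u-u)(\cdot,s)\|\le C\tau_k^2\,t_{k-1}^{\alpha_0-2}$ on that subinterval, and an integration by parts moving the derivative onto the kernel $\beta_{1-\alpha_0}'$. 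These are precisely the local error functionals of \cite{Liao,Stynes,Qiu}; because the perturbation reformulation \eqref{VEPDE} leaves the Caputo part $\partial_t^{\alpha_0}$ with the \emph{constant} order $\alpha_0$, the discretized operator $D_N^{\alpha_0}$ is the ordinary nonuniform L1 formula and those estimates transfer verbatim.

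Next I would multiply by $P_{m-n}^{(m)}$, sum over $1\le n\le m$, and interchange the order of summation. For the first-subinterval piece the $t_1^{\alpha_0-1}$ component collapses cleanly: $\int_{0}^{t_1}\beta_{1-\alpha_0}(t_n-s)\,t_1^{\alpha_0-1}\,ds=t_1^{\alpha_0}a_{n-1}^{(n)}$, and the discrete orthogonality of Lemma~\ref{lem01}(ii) with $k=1$ gives $\sum_{n=1}^{m}P_{m-n}^{(m)}a_{n-1}^{(n)}=1$, so this contribution equals $t_1^{\alpha_0}=T^{\alpha_0}N^{-r\alpha_0}$; the companion $s^{\alpha_0-1}$ component is handled by Lemma~\ref{lem01}(i), using $P_{m-1}^{(m)}\le\Gamma(2-\alpha_0)\tau_1^{\alpha_0}=CN^{-r\alpha_0}$ for the diagonal term $n=1$ and (iii) elsewhere, again yielding $O(N^{-r\alpha_0})$. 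For the smooth-region piece the orthogonality (ii) telescopes the weighted sum of the kernel integrals, and the cancellation encoded in Lemma~\ref{lem01}(iii), $\sum_{k}P_{m-k}^{(m)}\beta_{1-\alpha_0}(t_k)\le\beta_1(t_m)=1$, is what recovers the full second-order-in-$\tau$ accuracy against the fractional kernel and yields $O(N^{-(2-\alpha_0)})$. Adding the two contributions gives the asserted $N^{-\min\{2-\alpha_0,\,r\alpha_0\}}$.

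The delicate point, and the step I expect to be the main obstacle, is the smooth-region summation near the diagonal $k\approx n$ where the kernel $\beta_{1-\alpha_0}(t_n-s)$ is singular: a term-by-term bound using only the weight estimate Lemma~\ref{lem01}(i) loses a factor and degrades the rate to first order, so one must genuinely exploit the cancellation in property (iii) (and the shifted analogues of the orthogonality (ii)) to extract the exponent $2-\alpha_0$. Keeping the constant $C(T)$ uniform in $m$ and $N$, and ensuring that neither the interchange of summation nor the initial-layer bookkeeping introduces spurious logarithmic factors, is where the care is concentrated; this is exactly the balance resolved for the constant-order L1 scheme in \cite{Liao,Stynes,Qiu}, and the present reduction shows it applies unchanged here.
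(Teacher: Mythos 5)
Your proposal is sound and follows essentially the same route as the paper: the paper gives no proof of this lemma at all, quoting it from \cite{Qiu} on exactly the grounds you identify, namely that the reformulation \eqref{VEPDE} leaves a \emph{constant}-order Caputo term, so that $D_N^{\alpha_0}$ is the ordinary nonuniform L1 operator and the known global consistency bound of \cite{Liao,Stynes,Qiu} applies verbatim. Your reconstruction of that bound---the first-subinterval treatment via the orthogonality of Lemma~\ref{lem01}(ii), the Peano-kernel local estimates for $k\ge 2$, and the summation mechanism that converts the sum of kernel differences into $a_0^{(k)}P_{m-k}^{(m)}$-type terms and then invokes Lemma~\ref{lem01}(iii) so that only a maximum of local quantities of the form $t_k^{\alpha_0}t_{k-1}^{\alpha_0-2}\tau_k^{2-\alpha_0}$ survives---is the correct internal structure of the argument the citation encodes.
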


Then we analyze $(R_2)^n$ in \eqref{qwl09} with $n\geq 1$. We give the following lemma.
\begin{lemma} \label{lem04}
  Under the regularity assumption \eqref{regu}, it holds that
   \begin{align*}
       &\left\|(R_2)^1\right\| \leq Ct_1 (1+|\ln(t_1)| ),  \\
       &\left\|(R_2)^n\right\| \leq Ct_{n}^{-\alpha_0} \left(1+ \ln N \right) \Big[ N^{-\min\{ 2,  \,  r(1+\alpha_0) \}} + \Theta^N(\alpha_0,r) \Big], \quad 2\leq n \leq N,
   \end{align*}
   where $\Theta^N(\alpha_0,r)$ is defined as follows
   \begin{equation}\label{theta}
   \Theta^N(\alpha_0,r):=
    \begin{cases}
        N^{-2},  &  r(1+\alpha_0)>2, \nonumber \\
       N^{-2} \ln N, &  r(1+\alpha_0)=2, \nonumber \\
       N^{-r(1+\alpha_0)}, &  r(1+\alpha_0)<2.
     \end{cases} \nonumber \\
   \end{equation}
\end{lemma}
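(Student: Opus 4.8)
The plan is to work from a sharpened pair of kernel bounds, namely $|\widetilde g(t)|\le Ct^{1-\alpha_0}(1+|\ln t|)$ and $|\widetilde g'(t)|\le Ct^{-\alpha_0}(1+|\ln t|)$ for $0<t\le T$, which follow from differentiating the explicit representation \eqref{g}: as $z\to0$ the factor $t^{-\alpha(z)}$ gives the leading $t^{-\alpha_0}$ singularity while the $\ln t$ term supplies the logarithm. These refine \eqref{qq1} and are exactly what let $t_n^{-\alpha_0}$ (rather than $t_n^{-\alpha^*}$) appear in the statement; the coarser $\alpha^*$-bound is insufficient here because it would inject spurious positive powers of $N$. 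Together with the solution regularity $\|\partial_\theta u\|\le C\theta^{\alpha_0-1}$ and $\|\partial_\theta^2 u\|\le C\theta^{\alpha_0-2}$ (obtained from \eqref{regu} via elliptic regularity, $\|v\|\le C\|\Delta v\|$ for $v\in H^2\cap H^1_0$), I would estimate $(R_2)^1$, $(R_{21})^n$ and $(R_{22})^n$ separately.

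For $(R_2)^1=\int_0^{t_1}\widetilde g'(t_1-s)\big(\int_{t_1}^s\partial_\theta u\,d\theta\big)\,ds$ I would bound the inner integral by $\int_s^{t_1}\|\partial_\theta u\|\,d\theta\le C(t_1^{\alpha_0}-s^{\alpha_0})$ and insert $|\widetilde g'(t_1-s)|\le C(t_1-s)^{-\alpha_0}(1+|\ln(t_1-s)|)$; after rescaling $s=t_1\xi$ the $t_1$-powers collapse and the remaining $\xi$-integral is a convergent constant, leaving precisely $Ct_1(1+|\ln t_1|)$. For $(R_{21})^n$ with $n\ge2$ the kernel is evaluated away from its singularity since $t_n-s\ge t_n-t_1\ge t_n/2$, so $|\widetilde g'(t_n-s)|\le Ct_n^{-\alpha_0}(1+\ln N)$ on $[0,t_1]$; combined with $\|u(s)-u(t_1)\|\le Ct_1^{\alpha_0}$ and $|[0,t_1]|=t_1$ this yields $\|(R_{21})^n\|\le Ct_n^{-\alpha_0}(1+\ln N)\,t_1^{1+\alpha_0}=Ct_n^{-\alpha_0}(1+\ln N)N^{-r(1+\alpha_0)}$, which is dominated by the first bracketed term $N^{-\min\{2,\,r(1+\alpha_0)\}}$.

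The substantive term is $(R_{22})^n$. Using the Peano-kernel identity $\int_{t_{j-1}}^{t_j}|K(s;\theta)|\,d\theta=\tfrac12(s-t_{j-1})(t_j-s)\le\tfrac18\tau_j^2$ and $\|\partial_\theta^2 u\|\le C\theta^{\alpha_0-2}\le Ct_{j-1}^{\alpha_0-2}$, each cell contributes at most $C\tau_j^2\,t_{j-1}^{\alpha_0-2}\int_{t_{j-1}}^{t_j}|\widetilde g'(t_n-s)|\,ds$, so
\[
\|(R_{22})^n\|\le C\sum_{j=2}^{n}\tau_j^2\,t_{j-1}^{\alpha_0-2}\,W_j,\qquad W_j:=\int_{t_{j-1}}^{t_j}|\widetilde g'(t_n-s)|\,ds .
\]
I would split the sum at $j\approx n/2$. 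For $j\le n/2$ the kernel is regular, $W_j\le C\tau_j\,t_n^{-\alpha_0}(1+\ln N)$, and with the mesh law $\tau_j\approx rTN^{-r}j^{r-1}$, $t_{j-1}\approx TN^{-r}j^{r}$ one finds $\tau_j^3\,t_{j-1}^{\alpha_0-2}\approx(TN^{-r})^{1+\alpha_0}j^{\,r(1+\alpha_0)-3}$, reducing this block to $t_n^{-\alpha_0}(1+\ln N)N^{-r(1+\alpha_0)}\sum_{j\le n/2}j^{\,r(1+\alpha_0)-3}$. For $n/2<j\le n$ the kernel is singular but $t_{j-1}^{\alpha_0-2}\approx t_n^{\alpha_0-2}$ is harmless: the diagonal cell $j=n$ carries the integrable singularity $W_n\le C\tau_n^{1-\alpha_0}(1+\ln N)$, while for $j=n-i$ with $i\ge1$ one uses $t_n-t_j\approx\tau_n i$ to get $W_j\lesssim\tau_n^{1-\alpha_0}i^{-\alpha_0}(1+\ln N)$; the near-diagonal sum $\sum_{i\ge1}i^{-\alpha_0}\approx n^{1-\alpha_0}$ together with the mesh law bounds this block by $t_n^{-\alpha_0}(1+\ln N)N^{-r(1+\alpha_0)}n^{\,r(1+\alpha_0)-2}$.

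The case distinction defining $\Theta^N(\alpha_0,r)$ then emerges purely from the power sum $\sum_j j^{\,r(1+\alpha_0)-3}$ and the factor $n^{\,r(1+\alpha_0)-2}$ (with $n\le N$): the exponent $r(1+\alpha_0)-3$ crosses $-1$ exactly when $r(1+\alpha_0)=2$, giving a convergent sum ($N^{-r(1+\alpha_0)}$) when $r(1+\alpha_0)<2$, an extra $\ln N$ when $r(1+\alpha_0)=2$, and a growing sum that recombines with $n^{\,r(1+\alpha_0)-2}\le N^{\,r(1+\alpha_0)-2}$ to produce $N^{-2}$ when $r(1+\alpha_0)>2$; both blocks yield the same $\Theta^N$. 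I expect the main obstacle to be the bookkeeping near the diagonal $j\to n$, where the singularity of $\widetilde g'$ and the accumulating logarithmic factor must be tracked simultaneously while still extracting the clean prefactor $t_n^{-\alpha_0}(1+\ln N)$, and where one must check that the boundary case $r(1+\alpha_0)=2$ loses only a single power of $\ln N$ (already absorbed in $1+\ln N$) rather than a worse logarithmic factor. A secondary care point is uniformity in $n$: each block must be controlled by $N^{-\min\{2,\,r(1+\alpha_0)\}}+\Theta^N$ independently of $n$, which is precisely what forces the repeated use of $n\le N$ in the regime $r(1+\alpha_0)>2$.
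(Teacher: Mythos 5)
Your proposal is correct and follows essentially the same route as the paper's proof: the same refined kernel bound $|\widetilde g'(t)|\le Ct^{-\alpha_0}(1+|\ln t|)$, the same per-cell Peano-kernel estimate $C\tau_j^2\,t_{j-1}^{\alpha_0-2}\int_{t_{j-1}}^{t_j}|\widetilde g'(t_n-s)|\,ds$, the same splitting of the cell sum at $j\approx n/2$, and the same power-sum case analysis yielding $\Theta^N(\alpha_0,r)$ and the use of $n\le N$ when $r(1+\alpha_0)>2$. The only cosmetic difference is that you bound the near-diagonal block cell by cell via $t_n-t_j\approx i\tau_n$ and $\sum_{i\ge 1} i^{-\alpha_0}\le Cn^{1-\alpha_0}$, whereas the paper merges those cells into the single integral $\int_{t_{\lceil n/2\rceil}}^{t_{n-1}}|\widetilde g'(t_n-s)|\,ds\le Ct_n^{1-\alpha_0}(1+\ln N)$; both give the identical factor $N^{-r(1+\alpha_0)}n^{r(1+\alpha_0)-2}$.
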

\begin{proof}
  For  $(R_2)^n$ in \eqref{qwl09}, we will use the regularity assumption \eqref{regu} to derive the desired result. \par
  \textbf{I: Estimate of $\|(R_2)^1\|$.} 
First, we employ \eqref{regu} to obtain
\begin{align*}
   \|(R_2)^1\| & \leq  \int_{0}^{t_1} \left|\tilde{k}(t_1-s)\right|  \Big( \int_{0}^{t_1} \|\p_{\theta} u(\cdot,\theta) \| d\theta \Big) ds \leq C  \int_{0}^{t_1} |\ln(t_1-s)| (t_1-s)^{-\alpha_0}  t_1^{\alpha_0} ds \\[0.1in]
& = C t_1^{\alpha_0} \int_{0}^{t_1} \frac{-\ln(t_1-s)}{(t_1-s)^{\alpha_0}} ds \leq C t_1^{\alpha_0} \int_{0}^{t_1} \ln(t_1-s) d\big( (t_1-s)^{1-\alpha_0} \big) \\[0.1in]
& =  C t_1^{\alpha_0} \Big[ -\ln(t_1) t_1^{1-\alpha_0} + \int_{0}^{t_1}(t_1-s)^{-\alpha_0} ds  \Big] \leq Ct_1 (1+|\ln(t_1)| ).
\end{align*}

\textbf{II: Estimate of $\|(R_{21})^n\|$ with $n\geq 2$.}
First, we have $t_n-t_1\geq t_1$, satisfying for $s\in (0,t_1)$
  \begin{align*}
     |\tilde{k}(t_n-s)| & \leq C(t_n-s)^{-\alpha_0} (1+|\ln(t_n-s)|)  \\[0.1in]
     &\leq C(t_n-t_1)^{-\alpha_0} (1+|\ln(t_n-t_1)|) \leq C t_n^{-\alpha_0} (1+|\ln(t_1)|),
  \end{align*}
thus we use \eqref{regu} to get
\begin{align*}
   \|(R_{21})^n\| & \leq  \int_{0}^{t_1} |\tilde{k}(t_n-s)|  \Big( \int_{s}^{t_1} \|\p_{\theta} u(\cdot,\theta) \| d\theta \Big) ds \leq C t_n^{-\alpha_0} t_1  (1+|\ln(t_1)|) \int_{0}^{t_1}  \theta^{\alpha_0-1} d\theta \\[0.1in]
&    \leq Ct_n^{-\alpha_0} t_1^{\alpha_0+1} (1+\ln N) \leq Ct_n^{-\alpha_0} N^{-r(1+\alpha_0)} (1+\ln N).
\end{align*}

\textbf{III: Estimate of $\|(R_{22})^n\|$ with $n\geq 2$.} We rewrite $(R_{22})^n$ as
\begin{align*}
       (R_{22})^n  & = \int_{t_{n-1}}^{t_n}\tilde{k}(t_n-s) \Big(  \int_{t_{n-1}}^{t_n} K(s;\theta) \p_{\theta}^2 u(x,\theta)d\theta \Big)  ds \\[0.1in]
      & \quad + \sum\limits_{j=2}^{n-1}\int_{t_{j-1}}^{t_j}\tilde{k}(t_n-s) \Big(  \int_{t_{j-1}}^{t_j} K(s;\theta) \p_{\theta}^2 u(x,\theta)d\theta \Big)  ds =: (R_{221})^n + (R_{222})^n.
\end{align*}
We first estimate $(R_{221})^n$ by using \eqref{regu} and \eqref{ker}
\begin{align*}
   \|(R_{221})^n\| & \leq C\tau_n \int_{t_{n-1}}^{t_n} |\tilde{k}(t_n-s)| \Big( \int_{t_{n-1}}^{t_n} \|\p_{\theta}^2 u(\cdot,\theta)\|d\theta  \Big) ds \\
   & \leq C\tau_n^2 t_{n-1}^{\alpha_0-2} \int_{t_{n-1}}^{t_n} \big(1+|\ln(t_n-s)| \big) (t_n-s)^{-\alpha_0} ds,
\end{align*}
which follows
\begin{align*}
     \int_{t_{n-1}}^{t_n}  |\ln(t_n-s)| &  (t_n-s)^{-\alpha_0} ds = \int_{t_{n-1}}^{t_n} -\ln(t_n-s)  (t_n-s)^{-\alpha_0} ds \\
     & = \int_{t_{n-1}}^{t_n} \ln(t_n-s)  (t_n-s)^{-\alpha_0} d\Big( \frac{(t_n-s)^{1-\alpha_0} }{1-\alpha_0} \Big) \\
     & = - \ln(\tau_n)  \tau_n^{1-\alpha_0} + \int_{t_{n-1}}^{t_n} \frac{(t_n-s)^{1-\alpha_0} }{-\alpha_0} ds \\
     & \leq C \tau_n^{1-\alpha_0} (1+|\ln (\tau_n)|)
\end{align*}
to yield with $n\geq 2$
\begin{align*}
   \|(R_{221})^n\| &   \leq C\tau_n^{3-\alpha_0} t_{n}^{2\alpha_0-2} t_{n}^{-\alpha_0} (1+|\ln (\tau_n)|) \\[0.1in]
   & \leq C t_{n}^{-\alpha_0} (1+|\ln (\tau_1)|) \frac{n^{(r-1)(3-\alpha_0)}}{N^{r(3-\alpha_0)}} \Big(\frac{n}{N}\Big)^{2r(\alpha_0-1)} \\[0.1in]
& \leq Ct_{n}^{-\alpha_0} (1+ r|\ln \tau|) \frac{n^{r(1+\alpha_0)-(3-\alpha_0)} }{N^{r(1+\alpha_0)-(3-\alpha_0)}} N^{-(3-\alpha_0)} \\[0.1in]
&  \leq Ct_{n}^{-\alpha_0} (1+ \ln N)\, N^{-\min\{ 3-\alpha_0,  \, r(1+\alpha_0) \}}.
\end{align*}
We then estimate $(R_{222})^n$ and rewrite it as
\begin{align*}
   (R_{222})^n & = \sum\limits_{j=\lceil n/2 \rceil+1}^{n-1}\int_{t_{j-1}}^{t_j}\tilde{k}(t_n-s) \Big(  \int_{t_{j-1}}^{t_j} K(s;\theta) \p_{\theta}^2 u(x,\theta)d\theta \Big)  ds \\[0.1in]
 & \quad + \sum\limits_{j=2}^{\lceil n/2 \rceil} \int_{t_{j-1}}^{t_j}\tilde{k}(t_n-s) \Big(  \int_{t_{j-1}}^{t_j} K(s;\theta) \p_{\theta}^2 u(x,\theta)d\theta \Big)  ds =: (R_{2221})^n + (R_{2222})^n,
\end{align*}
where $\lceil \cdot \rceil$ denotes the ceiling operator. We see that, for $2\le j \leq n-1$,
\begin{align*}
   \Big\| \int_{t_{j-1}}^{t_j}\tilde{k}(t_n-s) & \Big(  \int_{t_{j-1}}^{t_j} K(s;\theta) \p_{\theta}^2 u(\cdot,\theta) d\theta \Big)  ds  \Big\| \\[0.1in]
   & \leq C\tau_j \int_{t_{j-1}}^{t_j}|\tilde{k}(t_n-s)| \int_{t_{j-1}}^{t_j}\|\p_{\theta}^2 u(\cdot,\theta)\| d\theta  ds \\[0.1in]
   & \leq C\tau_j \int_{t_{j-1}}^{t_j} (t_n-s)^{-\alpha_0} (1+|\ln(t_n-s)|) \int_{t_{j-1}}^{t_j}\theta^{\alpha_0-2} d\theta  ds \\[0.1in]
   & \leq C\tau_j^2 t_j^{\alpha_0-2} \int_{t_{j-1}}^{t_j} (t_n-s)^{-\alpha_0} (1+|\ln(t_n-s)|)   ds \\[0.1in]
   & \leq C\tau_j^2 t_j^{\alpha_0-2} (1+|\ln(\tau_n)|) \int_{t_{j-1}}^{t_j} (t_n-s)^{-\alpha_0} ds.
\end{align*}
Thus, for $\lceil n/2 \rceil +1 \leq j \leq n-1$, we have $t_j\geq 2^{-r}t_n$, which implies
\begin{align*}
    \|(R_{2221})^n\| & \leq  \sum\limits_{j=\lceil n/2 \rceil+1}^{n-1}  \int_{t_{j-1}}^{t_j} |\tilde{k}(t_n-s)| \Big(  \int_{t_{j-1}}^{t_j} |K(s;\theta)|  \left\| \p_{\theta}^2 u(\cdot,\theta) \right\| d\theta  \Big)  ds  \\[0.1in]
  & \leq  C\sum\limits_{j=\lceil n/2 \rceil+1}^{n-1}  \int_{t_{j-1}}^{t_j} |\tilde{k}(t_n-s)|   \tau_j^2 t_{j-1}^{\alpha_0-2} ds \\[0.1in]
  & \leq  C \tau_n^2 \, (t_{\lceil n/2 \rceil})^{\alpha_0-2}  \sum\limits_{j=\lceil n/2 \rceil+1}^{n-1}  \int_{t_{j-1}}^{t_j} |\tilde{k}(t_n-s)|   ds \\[0.1in]
  & \leq  C \tau_n^2 \, 2^{r(2-\alpha_0)}  t_{n}^{\alpha_0-2} \int_{t_{\lceil n/2 \rceil}}^{t_{n-1}} |\tilde{k}(t_n-s)|  ds.
\end{align*}
Then, we further obtain
\begin{align*}
   \|(R_{2221})^n\|
  & \leq  C \tau_n^2 \,  t_{n}^{2\alpha_0-2}  t_{n}^{-\alpha_0} \left(1+|\ln (\tau_n)|\right) t_n^{1-\alpha_0}  =  C \tau_n^2 \,  t_{n}^{\alpha_0-1}  t_{n}^{-\alpha_0} \left(1+|\ln (\tau_n)|\right)  \\[0.1in]
  & \leq Ct_{n}^{-\alpha_0} (1+ r|\ln \tau|) \frac{n^{r(1+\alpha_0)-2} }{N^{r(1+\alpha_0)-2}} N^{-2}  \leq Ct_{n}^{-\alpha_0} \left(1+ \ln N \right)  N^{-\min\{ 2, \,  r(1+\alpha_0) \}}.
\end{align*}
Subsequently, for $ 2 \leq j \leq \lceil n/2 \rceil $, we get $(t_n-t_j)^{-\alpha_0} \leq  (1-2^{-r})^{-\alpha_0} t_n^{-\alpha_0} $, which leads to
\begin{align*}
    \|(R_{2222})^n\| & \leq  \sum\limits_{j=2}^{\lceil n/2 \rceil}  \int_{t_{j-1}}^{t_j} |\tilde{k}(t_n-s)| \Big(  \int_{t_{j-1}}^{t_j} |K(s;\theta)|  \left\| \p_{\theta}^2 u(\cdot,\theta) \right\| d\theta  \Big)  ds  \\
  & \leq  C\sum\limits_{j=2}^{\lceil n/2 \rceil}  \int_{t_{j-1}}^{t_j} |\tilde{k}(t_n-s)|   \tau_j^2 t_{j-1}^{\alpha_0-2} ds  \leq  C(1+|\ln (\tau_n)|) \sum\limits_{j=2}^{\lceil n/2 \rceil}  \tau_j (t_n-t_j)^{-\alpha_0}  \tau_j^2 t_{j}^{\alpha_0-2}  \\
& \leq C (1+|\ln (\tau_n)|) t_n^{-\alpha_0} \sum\limits_{j=2}^{\lceil n/2 \rceil}  t_{j}^{\alpha_0-2}  \tau_j^3
\leq C (1+|\ln \tau|) t_n^{-\alpha_0} \sum\limits_{j=2}^{\lceil n/2 \rceil} \frac{j^{-r(2-\alpha_0)}}{N^{-r(2-\alpha_0)}} \frac{j^{3(r-1)}}{N^{3r}},
\end{align*}
which, together with \eqref{theta}, further gives
\begin{align}
    \|(R_{2222})^n\| & \leq  C (1+|\ln \tau|) t_n^{-\alpha_0} N^{-r(1+\alpha_0)} \sum\limits_{j=2}^{\lceil n/2 \rceil} j^{r(1+\alpha_0)-3} \nonumber \\
    &  \leq  C (1+\ln N) \, t_n^{-\alpha_0} \times
     \begin{cases}
        N^{-2},  &  r(1+\alpha_0)>2, \nonumber \\
       N^{-2} \ln N, &  r(1+\alpha_0)=2, \nonumber \\
       N^{-r(1+\alpha_0)}, &  r(1+\alpha_0)<2
     \end{cases} \nonumber \\
     & = C (1+\ln N)\, t_n^{-\alpha_0} \, \Theta^N(\alpha_0,r), 
\end{align}
where we used the fact that
\begin{align}\label{bs}
    \sum\limits_{j=2}^{\lceil n/2 \rceil} j^{r(1+\alpha_0)-3}  &\leq  C \times
     \begin{cases}
       n^{r(1+\alpha_0) -2},  &  r(1+\alpha_0)>2,   \\
       \ln n, &  r(1+\alpha_0)=2,   \\
       1, &  r(1+\alpha_0)<2
     \end{cases} \\
     & \leq C \times
     \begin{cases}
       N^{r(1+\alpha_0) -2},  &  r(1+\alpha_0)>2,   \\
       \ln N, &  r(1+\alpha_0)=2,    \\
       1, &  r(1+\alpha_0)<2 \\
     \end{cases} \nonumber\\
    & = C\Theta^N(\alpha_0,r).\nonumber
\end{align}
We then complete the proof of this lemma.
\end{proof}

Thus, we further obtain the following result.

\begin{lemma} \label{lem05}
  Under Lemma \ref{lem04} and Lemma \ref{lem01}, we have
   \begin{align*}
       \sum_{n=1}^{m}  P_{m-n}^{(m)} \left\|(R_2)^n\right\| \leq C (1+ \ln N )^2 N^{-\min\{2, \, r(1+\alpha_0) \}}, \quad 1\leq n \leq  m \leq N.
   \end{align*}
\end{lemma}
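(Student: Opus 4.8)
The plan is to split the sum according to the two regimes in Lemma~\ref{lem04}, namely the isolated $n=1$ term and the tail $2\le n\le m$, and to absorb the $n$-dependence of $\|(R_2)^n\|$ through property (iii) of Lemma~\ref{lem01}.

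First I would isolate the $n=1$ contribution $P^{(m)}_{m-1}\|(R_2)^1\|$. Using Lemma~\ref{lem01}(i) to bound $P^{(m)}_{m-1}\le \Gamma(2-\alpha_0)\tau_1^{\alpha_0}$, together with the mesh identities $t_1=\tau_1=TN^{-r}$ from \eqref{mesh} and the estimate $\|(R_2)^1\|\le Ct_1(1+|\ln t_1|)$ of Lemma~\ref{lem04}, this term is controlled by $C\tau_1^{\alpha_0}t_1(1+|\ln t_1|)\le C(1+\ln N)N^{-r(1+\alpha_0)}$. Since $r(1+\alpha_0)\ge \min\{2,\,r(1+\alpha_0)\}$, this is already dominated by $C(1+\ln N)N^{-\min\{2,\,r(1+\alpha_0)\}}$, hence within the claimed rate.

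For the tail, Lemma~\ref{lem04} gives $\|(R_2)^n\|\le Ct_n^{-\alpha_0}(1+\ln N)\big[N^{-\min\{2,\,r(1+\alpha_0)\}}+\Theta^N(\alpha_0,r)\big]$ for $2\le n\le m$, where the bracketed factor no longer depends on $n$. Factoring it out reduces the task to estimating $\sum_{n=2}^{m}P^{(m)}_{m-n}t_n^{-\alpha_0}$. The key step is to rewrite $t_n^{-\alpha_0}=\Gamma(1-\alpha_0)\beta_{1-\alpha_0}(t_n)$ and invoke Lemma~\ref{lem01}(iii) with $q=0$: since $\beta_{1}(t_m)\equiv 1$, one obtains $\sum_{n=1}^{m}P^{(m)}_{m-n}t_n^{-\alpha_0}\le \Gamma(1-\alpha_0)$, and a fortiori the same bound for the sum over $2\le n\le m$ by positivity of the kernels. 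This uniform bound is precisely what decouples the singular weight $t_n^{-\alpha_0}$ from the convolution weights $P^{(m)}_{m-n}$, and is the crux of the argument.

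It then remains to verify $\Theta^N(\alpha_0,r)\le (1+\ln N)N^{-\min\{2,\,r(1+\alpha_0)\}}$ in each of the three cases of \eqref{theta}: when $r(1+\alpha_0)>2$ both sides equal $N^{-2}$; when $r(1+\alpha_0)=2$ one has $\Theta^N=N^{-2}\ln N\le(1+\ln N)N^{-2}$; and when $r(1+\alpha_0)<2$ both sides equal $N^{-r(1+\alpha_0)}$. Hence $N^{-\min\{2,\,r(1+\alpha_0)\}}+\Theta^N(\alpha_0,r)\le C(1+\ln N)N^{-\min\{2,\,r(1+\alpha_0)\}}$, and multiplying by the constant bound for $\sum P^{(m)}_{m-n}t_n^{-\alpha_0}$ together with the remaining $(1+\ln N)$ factor yields the tail estimate $C(1+\ln N)^2N^{-\min\{2,\,r(1+\alpha_0)\}}$. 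Combining with the $n=1$ contribution gives the stated bound. The computations are elementary throughout; the only genuine content is the single application of Lemma~\ref{lem01}(iii), so I expect no serious obstacle beyond correctly tracking the two logarithmic factors that produce the $(1+\ln N)^2$ prefactor.
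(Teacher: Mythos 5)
Your proposal is correct and follows essentially the same route as the paper's proof: the same split into the $n=1$ term (bounded via Lemma \ref{lem01}(i) and the first estimate of Lemma \ref{lem04}) and the tail $2\le n\le m$, with the same key step of writing $t_n^{-\alpha_0}=\Gamma(1-\alpha_0)\beta_{1-\alpha_0}(t_n)$ and applying Lemma \ref{lem01}(iii) with $q=0$ to decouple the singular weight from the convolution kernels. The case analysis of $\Theta^N(\alpha_0,r)$ and the bookkeeping of the two logarithmic factors match the paper's argument as well.
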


\begin{proof} First, we rewrite $\sum_{n=1}^{m}  P_{m-n}^{(m)} \left\|(R_2)^n\right\|$ as
\begin{align*}
   \sum_{n=1}^{m}  P_{m-n}^{(m)} \left\|(R_2)^n\right\| = P_{m-1}^{(m)} \left\|(R_2)^1\right\| + \sum_{n=2}^{m}  P_{m-n}^{(m)} \left\|(R_2)^n\right\|.
\end{align*}
We use Lemma \ref{lem01} (i) and Lemma \ref{lem04} to obtain
\begin{align*}
    P_{m-1}^{(m)} \left\|(R_2)^1\right\| \leq C\tau_1^{\alpha_0} t_1 ( 1+|\ln(t_1)| ) \leq C\tau_1^{1+\alpha_0} ( 1+\ln N ) \leq CN^{-r(1+\alpha_0)} ( 1+\ln N ).
\end{align*}
Similarly, we get
\begin{align*}
    \sum_{n=2}^{m}  P_{m-n}^{(m)} \left\|(R_2)^n\right\| \leq C\left(1+ \ln N \right)  \sum_{n=2}^{m}  P_{m-n}^{(m)} t_{n}^{-\alpha_0} \Big[ N^{-\min\{ 2, \,  r(1+\alpha_0) \}}  + \Theta^N(\alpha_0,r) \Big],
\end{align*}
in which we follow Lemma \ref{lem01} (iii) to yield
\begin{align}
  \sum_{n=2}^{m}  P_{m-n}^{(m)} t_{n}^{-\alpha_0} \leq  \Gamma(1-\alpha_0) \sum_{n=1}^{m}   P_{m-n}^{(m)} \beta_{1-\alpha_0}(t_n) \leq  \Gamma(1-\alpha_0) \beta_{1}(t_m), \label{bbb}
\end{align}
which implies that
\begin{align*}
  \sum_{n=2}^{m}  P_{m-n}^{(m)} \left\|(R_2)^n\right\| & \leq  C\left(1+ \ln N \right) \Big[ N^{-\min\{ 2, \,  r(1+\alpha_0) \}}  +  N^{-\min\{ 2,  \, r(1+\alpha_0) \}} \ln N \Big] \\
& \leq C\left(1+ \ln N \right)^2 N^{-\min\{2, \, r(1+\alpha_0) \}}.
\end{align*}
Thus, we combine the above analysis to complete the proof.
\end{proof}

Next, we will establish the convergence result of the time-semidiscrete scheme, given by the following theorem.

\begin{theorem}\label{thm3.9}
Let $u^n$ and $U^n$ be the solution of \eqref{qwl03_new} and \eqref{qwl011}, respectively. It holds that
    \begin{align*}
       \|u^n-U^n\| \leq C(T) N^{-\min\{2-\alpha_0, \, r\alpha_0 \}}, \quad 1\leq n \leq N.
    \end{align*}
\end{theorem}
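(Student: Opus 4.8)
The plan is to reduce the convergence statement to the already-established stability estimate of Theorem~\ref{thm3.3}, applied to the error equation, and then to control the propagated truncation error through Lemmas~\ref{lem03} and~\ref{lem05}.

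First I would set $e^n := u^n - U^n$ and note that $e^0 = 0$. Subtracting the time-semidiscrete scheme \eqref{qwl011} from the identity \eqref{qwl010} satisfied by the exact solution, the forcing $F^n$ cancels and I obtain the error equation
\begin{equation*}
  D_N^{\alpha_0} e^n - \Delta e^n
  = -\Big( \omega_{n,1}\, e^{1} + \sum_{j=2}^{n} \omega_{n,j}\, e^{j-1/2} \Big) + R^n,
  \qquad 1 \le n \le N,
\end{equation*}
with $R^n = (R_1)^n - (R_2)^n$. This has exactly the same structure as \eqref{qwl011}--\eqref{qwl012}, with $U^n$ replaced by $e^n$, the initial datum $u_0$ replaced by $0$, and the forcing $F^n$ replaced by $R^n$. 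Since the proof of Theorem~\ref{thm3.3} uses only the linearity of the operator $D_N^{\alpha_0}-\Delta$ together with the memory-weight bounds \eqref{mm02}--\eqref{mm03} (which are independent of the particular right-hand side), the stability estimate \eqref{thm:eq1} applies with the same mesh restriction, yielding
\begin{equation*}
  \|e^m\| \le C(T) \max_{1 \le n \le m} \sum_{j=1}^{n} P^{(n)}_{n-j}\, \|R^j\|,
  \qquad 1 \le m \le N.
\end{equation*}

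Next I would split $R^j = (R_1)^j - (R_2)^j$ by the triangle inequality, so the right-hand side is dominated by $\max_n \sum_j P^{(n)}_{n-j}\|(R_1)^j\|$ plus $\max_n \sum_j P^{(n)}_{n-j}\|(R_2)^j\|$. The first sum is exactly the quantity bounded in Lemma~\ref{lem03}, contributing $C(T)\,N^{-\min\{2-\alpha_0,\,r\alpha_0\}}$, which is the target rate. The second sum is bounded by Lemma~\ref{lem05}, contributing $C(1+\ln N)^2\,N^{-\min\{2,\,r(1+\alpha_0)\}}$.

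The step I expect to require the most care is verifying that the $(R_2)$-contribution is genuinely of higher order and hence does not spoil the leading rate. Writing $R := \min\{2-\alpha_0,\,r\alpha_0\}$, I would observe that $R \le 2-\alpha_0 < 2$ and $R \le r\alpha_0 < r\alpha_0 + r = r(1+\alpha_0)$, so both entries of $\min\{2,\,r(1+\alpha_0)\}$ strictly exceed $R$, giving $\min\{2,\,r(1+\alpha_0)\} > R$ for every admissible $r \ge 1$ and $0 < \alpha_0 < 1$. Because this gap in the exponents is polynomial in $N$, it absorbs the logarithmic factor, so that $(1+\ln N)^2\,N^{-\min\{2,\,r(1+\alpha_0)\}} \le C\,N^{-\min\{2-\alpha_0,\,r\alpha_0\}}$ uniformly in $N$. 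Combining the two contributions then gives $\|e^m\| \le C(T)\,N^{-\min\{2-\alpha_0,\,r\alpha_0\}}$, which is the claimed estimate.
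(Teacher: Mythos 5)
Your proposal is correct and follows essentially the same route as the paper: form the error equation with zero initial data and forcing $R^n$, reduce via the stability machinery to bounding $\max_{1\le n\le m}\sum_{j=1}^{n}P^{(n)}_{n-j}\|R^j\|$, invoke Lemmas~\ref{lem03} and~\ref{lem05}, and absorb the $(1+\ln N)^2$ factor through the exponent gap of at least $\alpha_0$ between $\min\{2,\,r(1+\alpha_0)\}$ and $\min\{2-\alpha_0,\,r\alpha_0\}$. The only cosmetic difference is that you cite the conclusion of Theorem~\ref{thm3.3} as a black box (with a valid justification that its proof is insensitive to the right-hand side), whereas the paper reruns the simplified Gr\"onwall argument of Remark~\ref{rm2.4} on the error system; both yield the identical reduction.
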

\begin{proof}
   First, we denote
\begin{align*}
  \rho^n = u^n - U^n, \quad 0\leq n \leq N.
\end{align*}
We subtract \eqref{qwl011} from \eqref{qwl010} to yield the error equations as follows
\begin{align}
    & D_N^{\alpha_0} \rho^n - \Delta \rho^n = - \Big( \omega_{n,1} \rho^{1} + \sum\limits_{j=2}^{n} \omega_{n,j} \rho^{j-1/2} \Big) + R^n,   \quad n\geq 1, \label{lkx01} \\
    & \rho^0  = 0. \label{lkx02}
\end{align}
Based on Remark \ref{rm2.4}, for \eqref{lkx01}--\eqref{lkx02} we similarly obtain
\begin{align*}
   \|\rho^n\| & \leq 2E_{\alpha_0}\!\left(2\lambda_{*} t_n^{\alpha_0}\right)
  \Big(  \|\rho^0\| + 2\max_{1\le j\le n}\sum_{l=1}^{j} P_{j-l}^{(j)} \|R^l\| \Big) \\
  & \leq C(T)\max_{1\le j\le n}\sum_{l=1}^{j} P_{j-l}^{(j)} \|R^l\|
     \leq C(T)\max_{1\le j\le n}\sum_{l=1}^{j} P_{j-l}^{(j)} \big( \|(R_1)^l\| + \|(R_2)^l\| \big),
\end{align*}
which follows Lemmas \ref{lem03} and \ref{lem05} to obtain
\begin{align*}
   \|\rho^n\| & \leq C(T) \Big[ N^{-\min\{2-\alpha_0, \, r\alpha_0\}} +  \left(1+ \ln N \right)^2 N^{-\min\{2, \, r(1+\alpha_0) \}}  \Big].
\end{align*}
We combine this with
\begin{align*}
   \left(1+ \ln N \right)^2 N^{-\min\{2, \, r(1+\alpha_0) \}} & = \left(1+ \ln N \right)^2 N^{-\alpha_0} N^{-\min\{2-\alpha_0, \, r\alpha_0 + (r-\alpha_0) \}} \\
  & = \left( \frac{ 1+ \ln N}{N^{\alpha_0/2}} \right)^2 N^{-\min\{2-\alpha_0, \, r\alpha_0 + (r-\alpha_0) \}} \\
  & \leq \left( \frac{ 1+ \ln N}{N^{\alpha_0/2}} \right)^2 N^{-\min\{2-\alpha_0, \, r\alpha_0 \}}
\end{align*}
and $\lim\limits_{N\rightarrow \infty}\left( \frac{ 1+ \ln N}{N^{\alpha_0/2}} \right)^2 =0$ to finish the proof of the theorem.
\end{proof}

\subsection{Analysis of fully discrete scheme}

In this section, we formulate and analyze a fully discrete Galerkin scheme corresponding to the temporal semi-discrete scheme \eqref{qwl011}--\eqref{qwl012}.

Denote a quasi-uniform partition of $\Omega$ with mesh size $h$. Let $S_h$ be the finite element space consisting of continuous piecewise linear functions defined on $\Omega$. Denote the Ritz projector $I_h: H^1_0(\Omega)\rightarrow S_h$ by
\begin{align}
     \big(\nabla (I_h \varphi - \varphi), \nabla \hat{\chi} \big) = 0 \quad \text{for} \;\; \hat{\chi} \in S_h \label{yq05}
\end{align}
with the following approximation property \cite{Tho}
\begin{align}
  \|\p_t^{\ell}(\varphi - I_h \varphi) t\|\leq C h^2  \|\p_t^{\ell}\varphi\|_{H^2(\Omega)}, \quad \ell=0,1. \label{yq06}
\end{align}

Based on the time-semidiscrete scheme \eqref{qwl011}--\eqref{qwl012}, the fully discrete   Galerkin finite element scheme reads: find $U^n_h\in S_h$ such that for $1 \le n \le N$,
\begin{align}
   \left(D_N^{\alpha_0} U^n_h, \hat{\chi} \right) + \left(\nabla U_h^{n}, \nabla\hat{\chi} \right)  =  - \left(I_N^{\omega} U^n_h, \hat{\chi} \right) + (F^n,\hat{\chi} )  \label{lkx001}
\end{align}
with $I_N^{\omega} U^n_h :=\big[ \omega_{n,1} U^{1}_h + \sum_{j=2}^{n} \omega_{n,j} U^{j-1/2}_h \big]$ and $U_h^0= I_hu_0$. For the sake of convenience in the subsequent analysis, we introduce
 \begin{align*}
     & u(x,t_n)- U_h^n =\zeta^n - \eta^n, \quad 0\leq n \leq N,
 \end{align*}
in which
\begin{align*}
    & \zeta^n = I_h u(x,t_n) - U_h^n \in S_h, \quad \eta^n = I_h u(x,t_n) - u(x,t_n).
\end{align*}
Combining \eqref{qwl010} and \eqref{lkx001}, and applying \eqref{yq05}, we obtain
\begin{align}
   \left(D_N^{\alpha_0}\zeta^n, \hat{\chi} \right) + \left(\nabla \zeta^{n}, \nabla\hat{\chi} \right)  = - \left(I_N^{\omega}\zeta^n,  \hat{\chi} \right)  +  \left(R^{n} + D_N^{\alpha_0}\eta^n + I_N^{\omega}\eta^n, \, \hat{\chi} \right).   \label{lkx002}
\end{align}
We next establish the stability and derive the error estimates for the fully discrete finite element scheme.

\begin{theorem}  Let $U_h^n$ be the solution of the fully discrete scheme \eqref{lkx001}. Then the following stability result holds
  \begin{align}
       \|U_h^n\| \leq 2E_{\alpha_0}\!\left(2\lambda_{*} t_n^{\alpha_0}\right)
  \Big( \|U_h^0\| + 2\max_{1\le j\le n}\sum_{l=1}^{j} P_{j-l}^{(j)} \|F^l\| \Big), \quad 1\le n\le N.    \label{lkx003}
    \end{align}
Furthermore, under the regularity result \eqref{regu}, the following error estimate holds for $1\leq n \leq N$
    \begin{align}
         \|u^n-U_h^n\| \leq C(T)  \left( N^{-\min\{2-\alpha_0, \, r\alpha_0 \}} + h^2 \right). \label{lkx004}
    \end{align}
\end{theorem}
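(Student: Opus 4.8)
The plan is to prove the two assertions in turn, with the stability bound \eqref{lkx003} serving as the engine for the error estimate \eqref{lkx004}.

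For the stability bound \eqref{lkx003}, I would take $\hat\chi = 2U_h^n$ in \eqref{lkx001}, giving $2(D_N^{\alpha_0}U_h^n, U_h^n) + 2\|\nabla U_h^n\|^2 = -2(I_N^\omega U_h^n, U_h^n) + 2(F^n, U_h^n)$. The left-hand side is controlled below by $D_N^{\alpha_0}(\|U_h^n\|)^2 + 2\|\nabla U_h^n\|^2$ through \eqref{mm01}, and the memory term on the right is handled exactly as in the derivation of \eqref{phqq}: applying the Poincar\'e and Young inequalities together with the weight bounds \eqref{mm02}, \eqref{mm03} absorbs $2\|\nabla U_h^n\|^2$ and produces a tail $\tfrac{c_0^2 c_0'}{2}\sum_{j=1}^{n-1}(|\omega_{n,j}|+|\omega_{n,j+1}|)\|U_h^j\|^2$. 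With $\lambda_j$ as in \eqref{qqww02}, this yields $D_N^{\alpha_0}(\|U_h^n\|)^2 \le \sum_{j=1}^{n-1}\lambda_{n-j}\|U_h^j\|^2 + 2\|F^n\|\|U_h^n\|$, so the discrete fractional Gr\"onwall inequality (Lemma \ref{lem02} with $\xi^n = 2\|F^n\|$, $\vartheta^n = 0$) gives \eqref{lkx003}. This step is essentially a transcription of Remark \ref{rm2.4} into the finite element space and presents no new difficulty.

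For the error estimate I would use the splitting $u^n - U_h^n = \zeta^n - \eta^n$ and bound the two pieces separately. The projection part is immediate: \eqref{yq06} with $\ell=0$ and the uniform bound $\|u(\cdot,t_n)\|_{H^2} \le C$ (which follows from elliptic regularity and integrating $\|\partial_t\Delta u\|\le Ct^{\alpha_0-1}$ from \eqref{regu}) give $\|\eta^n\|\le Ch^2$. For $\zeta^n$, I observe that the error equation \eqref{lkx002} has exactly the form of the scheme \eqref{lkx001} with $\zeta^0 = 0$ and the source $F^n$ replaced by $R^n + D_N^{\alpha_0}\eta^n + I_N^\omega\eta^n$. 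Hence \eqref{lkx003} gives $\|\zeta^n\| \le C(T)\max_{1\le j\le n}\sum_{l=1}^j P_{j-l}^{(j)}(\|R^l\| + \|D_N^{\alpha_0}\eta^l\| + \|I_N^\omega\eta^l\|)$, and it remains to bound the three weighted sums. The consistency term is controlled as in Theorem \ref{thm3.9}: Lemmas \ref{lem03} and \ref{lem05} give $\sum_l P_{j-l}^{(j)}\|R^l\| \le C(T)N^{-\min\{2-\alpha_0, r\alpha_0\}}$ after absorbing the logarithmic factor exactly as in the proof of Theorem \ref{thm3.9}.

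The heart of the argument, and the main obstacle, is the treatment of the two projection-induced source terms. For $D_N^{\alpha_0}\eta^l$ the clean device is to push the norm inside and invoke the discrete orthogonality relation: since $\|D_N^{\alpha_0}\eta^l\| \le \sum_{k=1}^l a_{l-k}^{(l)}\|\delta_\tau\eta^k\|$, interchanging the order of summation and using Lemma \ref{lem01}(ii) gives $\sum_{l=1}^j P_{j-l}^{(j)}\|D_N^{\alpha_0}\eta^l\| \le \sum_{k=1}^j\|\delta_\tau\eta^k\|\sum_{l=k}^j P_{j-l}^{(j)}a_{l-k}^{(l)} = \sum_{k=1}^j\|\delta_\tau\eta^k\|$; then \eqref{yq06} with $\ell=1$ together with $\|\partial_t u\|_{H^2}\le C t^{\alpha_0-1}$ yields $\sum_{k=1}^j\|\delta_\tau\eta^k\| \le Ch^2\int_0^{t_j}s^{\alpha_0-1}\,ds \le Ch^2$. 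For $I_N^\omega\eta^l$ I would use the uniform bound $\|\eta^k\|\le Ch^2$ and $\sum_{j'=1}^l|\omega_{l,j'}|\le c_0'$ from \eqref{mm02} to get $\|I_N^\omega\eta^l\|\le Ch^2$, and then Lemma \ref{lem01}(iii) with $q=1$ (giving $\sum_{l=1}^j P_{j-l}^{(j)}\le\beta_{1+\alpha_0}(t_j)\le C(T)$) to conclude $\sum_l P_{j-l}^{(j)}\|I_N^\omega\eta^l\|\le C(T)h^2$. Collecting the three bounds gives $\|\zeta^n\|\le C(T)(N^{-\min\{2-\alpha_0,r\alpha_0\}}+h^2)$, and the triangle inequality with $\|\eta^n\|\le Ch^2$ completes \eqref{lkx004}. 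I expect the orthogonality telescoping for the $D_N^{\alpha_0}\eta$ term --- ensuring the norm can be moved inside without losing the required rate --- to be the most delicate point, while everything else is bookkeeping on top of the already-established lemmas.
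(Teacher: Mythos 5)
Your proof is correct, and both halves run on the same machinery as the paper: the stability bound is obtained exactly as in the paper by transcribing Remark \ref{rm2.4} into $S_h$, the error analysis uses the same splitting $u^n-U_h^n=\zeta^n-\eta^n$, and the consistency term $R^n$ is handled identically via Lemmas \ref{lem03} and \ref{lem05} plus the log-absorption step of Theorem \ref{thm3.9}. The genuine difference is in how the projection-induced sources $D_N^{\alpha_0}\eta^n$ and $I_N^{\omega}\eta^n$ enter the Gr\"onwall argument. The paper tests \eqref{lkx002} with $\hat{\chi}=\zeta^n$ and re-runs Lemma \ref{lem02}, routing these terms through the $\vartheta$-channel (pointwise maximum multiplied by $\beta_{1+\alpha_0}(t_n)$), so it only needs the pointwise bounds $\|D_N^{\alpha_0}\eta^m\|\le Ch^2$ and $\|I_N^{\omega}\eta^m\|\le Ch^2$, the first of which is not proved in the paper but cited from \cite[Equations (55)--(59)]{Qiu}. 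You instead reuse \eqref{lkx003} as a black box on the error equation, which forces all sources through the weighted-sum channel $\sum_l P_{j-l}^{(j)}(\cdot)$, and you pay for this with the orthogonality telescoping: $\sum_{l=1}^{j}P_{j-l}^{(j)}\|D_N^{\alpha_0}\eta^l\|\le\sum_{k=1}^{j}\|\delta_\tau\eta^k\|$ via Lemma \ref{lem01}(ii), then $\sum_k\|\delta_\tau\eta^k\|\le Ch^2\int_0^{t_j}s^{\alpha_0-1}\,ds\le Ch^2$ via \eqref{yq06} with $\ell=1$; the $I_N^{\omega}\eta$ sum is then controlled by Lemma \ref{lem01}(iii) with $q=1$. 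Both routes are valid; yours buys a self-contained argument that avoids the external citation and never needs pointwise control of the discrete Caputo derivative of the projection error, while the paper's buys a shorter write-up that sidesteps the summation interchange at the cost of importing that estimate from prior work.
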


\begin{proof}
  We first follow the proof of Remark \ref{rm2.4} and then take $\hat{\chi}=U_h^n$ in \eqref{lkx001} to obtain \eqref{lkx003}. We then choose $\hat{\chi} =\zeta^n$ in \eqref{lkx002} and follow  \eqref{qqww01}--\eqref{qqww02} to conclude that
 \begin{align*}
     D_N^{\alpha_0}(\|\zeta^n\|)^2 & \leq  \sum\limits_{j=1}^{n-1} \lambda_{n-j} \| \zeta^{j}\|^2 + 2 \big(\|R^n + D_N^{\alpha_0}\eta^n + I_N^{\omega}\eta^n\| \big) \|\zeta^n\| \\
 & \leq  \sum\limits_{j=1}^{n-1} \lambda_{n-j} \| \zeta^{j}\|^2 + 2 \big(\|R^n\| + \|D_N^{\alpha_0}\eta^n\| + \|I_N^{\omega}\eta^n\| \big) \|\zeta^n\|,
\end{align*}
which uses Lemma \ref{lem02} with $\xi^n =2 \|R^n\|$ and $\vartheta^n = 2 \big( \|D_N^{\alpha_0}\eta^n\| + \|I_N^{\omega}\eta^n\| \big)$ to yield
\begin{equation*}
  \|\zeta^n\| \le 2E_{\alpha_0}\!\left(2\lambda_{*} t_n^{\alpha_0}\right)
  \Big( \|\zeta^0\| + 2\max_{1\le j\le n}\sum_{l=1}^{j} P_{j-l}^{(j)} \|R^l\|
     + 2\beta_{1+\alpha_0}(t_n)\max_{1\le j\le n} \big( \|D_N^{\alpha_0}\eta^j\| + \|I_N^{\omega}\eta^j\| \big)  \Big).
\end{equation*}
We then apply Lemma \ref{thm3.9} to obtain
\begin{equation*}
  \|\zeta^n\| \le C(T)
  \Big[ N^{-\min\{2-\alpha_0, \, r\alpha_0 \}} + \max_{1\le m\le n} \big( \|D_N^{\alpha_0}\eta^m\| + \|I_N^{\omega}\eta^m\| \big)  \Big].
\end{equation*}
We follow \eqref{yq06} and our previous work \cite[Equations (55)--(59)]{Qiu} to get
\begin{align*}
  \max_{1\le m\le n}  \|D_N^{\alpha_0}\eta^m\| & \leq Ch^2.
\end{align*}
Consequently, the proof is reduced to deriving an estimate for $\|I_N^{\omega}\eta^m\|$ with $1\leq m \leq n$. Then, \eqref{yq06} and \eqref{mm03} give
\begin{align*}
    \|I_N^{\omega}\eta^m\| & \leq |\omega_{m,1}| \|\eta^1\| + \sum_{j=2}^{m} |\omega_{m,j}| \| \eta^{j-1/2} \|  \leq |\omega_{m,1}| \|\eta^1\| + \sum_{j=2}^{m} |\omega_{m,j}| \frac{\|\eta^{j}\| + \|\eta^{j-1}\|}{2} \\
     & \leq \max\limits_{1\leq j \leq m} \|\eta^j\| \Big( \sum_{j=1}^{m} |\omega_{m,j}| \Big)  \leq C h^{2}   \left\|u \right\|_{H^2} \int_{0}^{t_m} (t_m-s)^{-\alpha^*}ds \leq Ch^2.
\end{align*}
Invoking the above three estimates together with \eqref{yq06} yields \eqref{lkx004}, and hence completes the proof.
\end{proof}

 \section{Numerical experiment}\label{sec6}

 In this section, we will provide several numerical examples to validate our theoretical analysis. Without loss of generality, we set $T=1$ and $\Omega=(0,1)^d$. To better examine the space-time convergence rate, we choose $r=\frac{2-\alpha_0}{\alpha_0}$ in \eqref{mesh}. To avoid the impact of rounding errors, we use \cite[Algorithm 3.1]{Quan24ANM} to compute the coefficients in the nonuniform L1 formula \eqref{qwl04_new}.
 
 To measure the convergence of proposed methods, we apply the two-mesh principle \cite[p.~107]{Farrell} to denote the discrete $L^2$ errors and convergence rates as follows
 \begin{align*}
  & \text{Err}_{\tau}^d(\tau,h)
 = 
 \left(
 h^d
 \sum_{\boldsymbol{i}\in\mathcal I_h}
 \left|
 U^N_{\boldsymbol{i}}(\tau,h)
 -
 U^{2N}_{\boldsymbol{i}}\!\left(\tfrac{\tau}{2},h\right)
 \right|^2
 \right)^{1/2}, \quad \text{Rate}_{\tau}^d = \log\left( \frac{\text{Err}_{\tau}^d(2\tau,h)}{\text{Err}_{\tau}^d(\tau,h)} \right), \\
 & \text{Err}_{h}^d(\tau,h)
 = 
 \left(
 h^d
 \sum_{\boldsymbol{i}\in\mathcal I_h}
 \left|
 U^N_{\boldsymbol{i}}(\tau,h)
 -
 U^{N}_{2\boldsymbol{i}}\!\left(\tau,\tfrac{h}{2}\right)
 \right|^2
 \right)^{1/2}, \quad \text{Rate}_{h}^d = \log\left( \frac{\text{Err}_{h}^d(\tau,2h)}{\text{Err}_{h}^d(\tau,h)} \right), 
 \end{align*}
 where
 \[
 \boldsymbol{i} = (i_1,\ldots,i_d),
 \quad
 \mathcal I_h
 := \left\{
 \boldsymbol{i} \in \mathbb{N}^d :
 1 \le i_\ell \le J-1,\ \ell = 1,\ldots,d
 \right\}.
 \]
In the following examples, we fix $J=32$ to test the time convergence rate and accordingly fix $N=128$ to test the spatial convergence rate, respectively.

%\subsection{Convergence test for problem (\ref{VEPDE}) }

\textbf{Example 1.} For $d=1$, we set $u_0=\sin (\pi x)$ and $f\equiv 1$, and the variable exponent $\alpha(t)=\alpha_0+\frac{1}{5}\sin (t)$.
We present the convergence results in Tables~\ref{tab1}--\ref{tab2}, which demonstrate that the scheme exhibits the temporal convergence rate of order $2-\alpha_0$ and the spatial convergence rate of second order. These results are consistent with the theoretical analysis.

\begin{table}
    \center  \small
    \caption{Errors and convergence rates in time   for Example 1.} \label{tab1}
%  \resizebox{\textwidth}{!}{
    \begin{tabular}{cccccccccccc}
      \toprule
    & \multicolumn{2}{c}{$\alpha_0 =0.3$} &
     &\multicolumn{2}{c}{$\alpha_0 =0.5$} &
     &\multicolumn{2}{c}{$\alpha_0 =0.75$}\\
   \cmidrule{2-3}  \cmidrule{5-6} \cmidrule{8-9}
       $N$  & $\text{Err}_{\tau}^1(\tau,h)$ & $\text{Rate}_{\tau}^1$ & $N$  & $\text{Err}_{\tau}^1(\tau,h)$ & $\text{Rate}_{\tau}^1$ & $N$  & $\text{Err}_{\tau}^1(\tau,h)$ & $\text{Rate}_{\tau}^1$\\
      \midrule
        32   &  $3.2820 \times 10^{-5}$ &  *      &  64    & $2.8642 \times 10^{-5}$  &  *    &  128    & $3.6011 \times 10^{-5}$  &  *    \\
        64   &  $9.5333 \times 10^{-6}$ &  1.78   &  128   & $9.4374 \times 10^{-6}$  &  1.60 &  256    & $1.4997 \times 10^{-5}$  &  1.26 \\
        128   &  $2.7686 \times 10^{-6}$ &  1.78   &  256   & $3.1504 \times 10^{-6}$  &  1.58 &  512   & $6.2907 \times 10^{-6}$  &  1.25 \\
        256   &  $7.9358 \times 10^{-7}$ &  1.80   &  512   & $1.0599 \times 10^{-6}$  &  1.57 &  1024   & $2.6457 \times 10^{-6}$  &  1.25 \\
      \bottomrule
    \end{tabular}
%    }
\end{table}

\begin{table}
    \center \small
    \caption{Errors and convergence rates in space for Example 1.} \label{tab2}
     %\vspace{-0.2in}
%    \resizebox{\textwidth}{!}{
  \begin{tabular}{ccccccccccccccccccc}
      \toprule
    & \multicolumn{2}{c}{$\alpha_0 =0.2$} &
     &\multicolumn{2}{c}{$\alpha_0 =0.4$} &
     &\multicolumn{2}{c}{$\alpha_0 =0.6$}\\
   \cmidrule{2-3}  \cmidrule{5-6} \cmidrule{8-9}
       $J$  & $\text{Err}_{h}^1(\tau,h)$ & $\text{Rate}_{h}^1$ & $J$  & $\text{Err}_{h}^1(\tau,h)$ & $\text{Rate}_{h}^1$& $J$  & $\text{Err}_{h}^1(\tau,h)$ & $\text{Rate}_{h}^1$\\
      \midrule
        32   &  $5.3413 \times 10^{-4}$ &    *    &  32    & $5.2267 \times 10^{-4}$  &    *    &  32    & $5.0781 \times 10^{-4}$  &    *    \\
        64   &  $1.3529 \times 10^{-4}$ &  1.98   &  64    & $1.3246 \times 10^{-4}$  &  1.98   &  64    & $1.2880 \times 10^{-4}$  &  1.98 \\
        128  &  $3.4038 \times 10^{-5}$ &  1.99   &  128   & $3.3336 \times 10^{-5}$  &  1.99   &  128   & $3.2426 \times 10^{-5}$  &  1.99 \\
        256  &  $8.5362 \times 10^{-6}$ &  2.00   &  256   & $8.3613 \times 10^{-6}$  &  2.00   &  256   & $8.1345 \times 10^{-6}$  &  2.00 \\
      \bottomrule
    \end{tabular}
%    }
\end{table}

\textbf{Example 2.}  Let $d=2$, $u_0=\sin (\pi x)\sin (\pi y)$ and $f\equiv 1$. We choose 
\begin{align*}
  \alpha(t)=  \alpha_T + \big( \alpha_0 - \alpha_T \big) \left( 1-\frac{t^2}{T^2} \right),
\end{align*}
which satisfies $\alpha(0) =\alpha_0$ and $\alpha(T)=\alpha_T$.

Tables~\ref{tab3}--\ref{tab4} present the convergence results, which show that the scheme exhibits a temporal convergence of order $2-\alpha_0$ and a second-order spatial convergence. These results substantiate our theoretical findings.

\begin{table}
    \center  \small
    \caption{Errors and convergence rates in time  for Example 2.} \label{tab3}
%  \resizebox{\textwidth}{!}{
    \begin{tabular}{cccccccccccc}
      \toprule
    & \multicolumn{2}{c}{$\alpha_0=0.4$, $\alpha_T=0.8$} &
     &\multicolumn{2}{c}{$\alpha_0=0.6$, $\alpha_T=0.4$} &
     &\multicolumn{2}{c}{$\alpha_0=0.8$, $\alpha_T=0.1$}\\
   \cmidrule{2-3}  \cmidrule{5-6} \cmidrule{8-9}
       $N$  & $\text{Err}_{\tau}^2(\tau,h)$ & $\text{Rate}_{\tau}^2$ & $N$  & $\text{Err}_{\tau}^2(\tau,h)$ & $\text{Rate}_{\tau}^2$ & $N$  & $\text{Err}_{\tau}^2(\tau,h)$ & $\text{Rate}_{\tau}^2$\\
      \midrule
        64   &  $4.9229 \times 10^{-6}$ &  *      &  128    & $1.8293 \times 10^{-6}$  &  *    &  256    & $1.0019 \times 10^{-5}$  &  *    \\
        128   &  $1.6695 \times 10^{-6}$ &  1.56   &  256   & $6.9275 \times 10^{-7}$  &  1.40 &  512    & $4.6321 \times 10^{-6}$  &  1.11 \\
        256   &  $5.6593 \times 10^{-7}$ &  1.56   &  512   & $2.6367 \times 10^{-7}$  &  1.39 &  1024   & $2.1021 \times 10^{-6}$  &  1.14 \\
        512   &  $1.9049 \times 10^{-7}$ &  1.57   &  1024   & $1.0035 \times 10^{-7}$  &  1.39 &  2048   & $9.4478 \times 10^{-7}$  &  1.15 \\
      \bottomrule
    \end{tabular}
%    }
\end{table}

\begin{table}
    \center  \small
    \caption{Errors and convergence rates in space   for Example 2.} \label{tab4}
     %\vspace{-0.2in}
%    \resizebox{\textwidth}{!}{
  \begin{tabular}{ccccccccccccccccccc}
      \toprule
    & \multicolumn{2}{c}{$\alpha_0=0.3$, $\alpha_T=0.7$} &
     &\multicolumn{2}{c}{$\alpha_0=0.5$, $\alpha_T=0.3$} &
     &\multicolumn{2}{c}{$\alpha_0=0.7$, $\alpha_T=0.1$}\\
   \cmidrule{2-3}  \cmidrule{5-6} \cmidrule{8-9}
       $J$  & $\text{Err}_{h}^2(\tau,h)$ & $\text{Rate}_{h}^2$ & $J$  & $\text{Err}_{h}^2(\tau,h)$ & $\text{Rate}_{h}^2$& $J$  & $\text{Err}_{h}^2(\tau,h)$ & $\text{Rate}_{h}^2$\\
      \midrule
        32   &  $4.1987 \times 10^{-5}$ &    *    &  32    & $9.0962 \times 10^{-5}$  &    *    &  32    & $1.0269 \times 10^{-4}$  &    *    \\
        64   &  $1.0644 \times 10^{-5}$ &  1.98   &  64    & $2.2952 \times 10^{-5}$  &  1.99   &  64    & $2.5893 \times 10^{-5}$  &  1.99 \\
        128  &  $2.6696 \times 10^{-6}$ &  2.00   &  128   & $5.7511 \times 10^{-6}$  &  2.00   &  128   & $6.4866 \times 10^{-6}$  &  2.00 \\
        256  &  $6.6790 \times 10^{-7}$ &  2.00   &  256   & $1.4386 \times 10^{-6}$  &  2.00   &  256   & $1.6225 \times 10^{-6}$  &  2.00 \\
      \bottomrule
    \end{tabular}
%    }
\end{table}

\section{Concluding remarks}

 In this work, we investigate the variable-exponent subdiffusion model under nonuniform temporal meshes and derive its optimal error estimate. 
By reformulating the original model via perturbation methods, the Caputo derivative and convolution terms in the reformulated model are discretized by the L1 scheme combined with an interpolation quadrature rule. 
Rigorous numerical analysis establishes a temporal convergence rate of 
$O\big(N^{-\min\{2-\alpha_0,\, r\alpha_0\}}\big)$,
which  improves the existing results in \cite{Zheng} for $r \ge \frac{2-\alpha_0}{\alpha_0}$. Numerical experiments confirm the accuracy and effectiveness of the theoretical estimates.

One interesting topic is to investigate the high-order numerical scheme for  \eqref{eq1.1}--\eqref{eq1.2}. The L2-1$_{\sigma}$ scheme is proposed in  \cite{Alik} to approximate the time-fractional diffusion equation. In principle, one could combine \eqref{regu}, the approach in  \cite{Alik} and follow  \cite{Liao1} to analyze the resulting high-order scheme for the reformulated model \eqref{VEPDE} to obtain the temporal accuracy of order $O\big(N^{-\min\{r\alpha_0, \,2\}}\big)$, the details of which will be investigated in the near future.

%%%%%%%%%%%%%%%%%%%%%%%%%%
\appendix

\setcounter{equation}{0}
\renewcommand{\theequation}{A.\arabic{equation}}
\setcounter{theorem}{0}
\renewcommand{\thetheorem}{A.\arabic{theorem}}

\section*{Appendix: Proof of (\ref{conv3})}
We here establish the convergence of the time-discrete scheme \eqref{q004}. With the notation $\tilde\rho^n:=u^n-U^n$ for $0\leq n \leq N$, we can give the following error equations for \eqref{q004} 
 \begin{align}
       & D_N^{\alpha_0} \tilde\rho^n - \Delta \tilde\rho^n = -I_N^{\tilde{\omega}}\mathcal{A}\tilde\rho^n  +  \tilde{R}^n, \quad n = 1,2,\cdots,N, \label{q006} \\
       & \tilde\rho^0 = 0, \label{q007}
   \end{align}
where $\tilde{R}^n:=(R_1)^n-(\tilde{R}_2)^n$.
 The error $(R_1)^n$ bounded by \eqref{qwl06_new} and the quadrature errors $(\tilde{R}_2)^n:=\int_{0}^{t_n}\mathcal{K}(t_n-s)\mathcal{A}u(x,s) ds- I_N^{\tilde{\omega}}\mathcal{A}u^n$ are given via
 \begin{equation}\label{q003}
   \begin{split}
      (\tilde{R}_2)^1  &  = \int_{0}^{t_1} \mathcal{K}(t_1-s) \Big( \int_{t_1}^{s} \p_{\theta} \mathcal{A} u(x,\theta)d\theta \Big)  ds, \\
      (\tilde{R}_2)^n  &  = \int_{0}^{t_1} \mathcal{K}(t_n-s) \Big( \int_{t_1}^{s} \p_{\theta} \mathcal{A} u(x,\theta)d\theta \Big)  ds \\
       & \quad +  \sum\limits_{j=2}^{n}\int_{t_{j-1}}^{t_j}\mathcal{K}(t_n-s) \Big(  \int_{t_{j-1}}^{t_j} K(s;\theta) \p_{\theta}^2\mathcal{A} u(x,\theta)d\theta \Big)  ds\\
       & =: (\tilde{R}_{21})^n + (\tilde{R}_{22})^n, 
       \quad n \geq 2,
   \end{split}
 \end{equation}
 where the Peano kernel $K(s;\theta)$ given by \eqref{ker}. Next, we will prove the convergence of the scheme \eqref{q004}. Before that, we give a helpful lemma for the estimate of $(\tilde{R}_2)^n$ with $n\geq 1$.

 \begin{lemma}\label{lem5.1}
    Under the regularity assumption \eqref{regu}, it hold that
    \begin{align*}
        &\left\|(\tilde{R}_2)^1\right\| \leq CN^{-r(1+\alpha_0-\mu)},  \\
        &\left\|(\tilde{R}_2)^n\right\| \leq Ct_{n}^{-\alpha_0}\, \Big[  N^{-\min\left\{ r(1+2\alpha_0-\mu), \, 2 \right\}} + \Phi^N(\alpha_0,\mu,r) \Big], \quad 2\leq n \leq N,
    \end{align*}
 where the notation $\Phi^N(\alpha_0,\mu,r)$ defined by 
 \begin{align*}
     \Phi^N(\alpha_0,\mu,r) :=
      \begin{cases}
       \displaystyle  N^{-\min\{r(1+2\alpha_0-\mu), \, 2\}},  &  r(1+\alpha_0)>2, \nonumber \vspace{1mm} \\
       \displaystyle  N^{-\min\{r(1+2\alpha_0-\mu), \, r(1+\alpha_0)\}} \ln N, &  r(1+\alpha_0)=2, \nonumber \vspace{1mm} \\
       \displaystyle  N^{-\min\{r(1+2\alpha_0-\mu), \, r(1+\alpha_0)\}}, &  r(1+\alpha_0)<2.
      \end{cases} 
 \end{align*}
 \end{lemma}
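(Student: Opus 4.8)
The plan is to follow the proof of Lemma~\ref{lem04} almost verbatim, replacing the kernel bound used there by $|\mathcal{K}(t)|\le C_0 t^{-\mu}$ (which now carries no logarithmic factor) and replacing $u$ throughout by $\mathcal{A}u$. The regularity input is that \eqref{regu}, together with elliptic/Poincar\'e estimates and $|\tilde w_j|\le 1$, yields $\|\partial_\theta\mathcal{A}u(\cdot,\theta)\|\le C\theta^{\alpha_0-1}$ and $\|\partial_\theta^2\mathcal{A}u(\cdot,\theta)\|\le C\theta^{\alpha_0-2}$; cf.\ \cite[Eq.~(11)]{Qiu}. The one genuinely new feature relative to Lemma~\ref{lem04} is that the kernel exponent $\mu$ no longer matches the temporal-regularity exponent $\alpha_0$, so after a factor $t_n^{-\mu}$ is extracted it must be rewritten as $t_n^{-\mu}=t_n^{-\alpha_0}\,t_n^{\alpha_0-\mu}$ and the surplus power converted into powers of $N$; this conversion is exactly what produces the $\min\{r(1+2\alpha_0-\mu),\cdot\}$ structure in the statement.

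\textbf{The two easy pieces.} For $\|(\tilde R_2)^1\|$ I would bound the inner integral by $\int_0^{t_1}\|\partial_\theta\mathcal{A}u\|\,d\theta\le Ct_1^{\alpha_0}$ and the outer one by $\int_0^{t_1}(t_1-s)^{-\mu}\,ds\le Ct_1^{1-\mu}$, giving $Ct_1^{1+\alpha_0-\mu}=CN^{-r(1+\alpha_0-\mu)}$, which is the first claim. For $(\tilde R_{21})^n$ with $n\ge2$ I would use $t_n-t_1\ge(1-2^{-r})t_n$ to replace $|\mathcal{K}(t_n-s)|$ by $Ct_n^{-\mu}$, obtaining $Ct_n^{-\mu}t_1^{1+\alpha_0}$; writing $t_1=(T^{1/r}/N)^r$, $t_n=(n\tau)^r$ and splitting on the sign of $\alpha_0-\mu$ then gives $Ct_n^{-\alpha_0}N^{-\min\{r(1+\alpha_0),\,r(1+2\alpha_0-\mu)\}}$, which is subsumed by $N^{-\min\{r(1+2\alpha_0-\mu),2\}}+\Phi^N$.

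\textbf{The memory term $(\tilde R_{22})^n$.} As in Lemma~\ref{lem04} I would split off the diagonal term $(\tilde R_{221})^n$ ($j=n$) and decompose the remaining sum into a near-diagonal block $(\tilde R_{2221})^n$ ($\lceil n/2\rceil<j\le n-1$) and a far block $(\tilde R_{2222})^n$ ($2\le j\le\lceil n/2\rceil$). On the first two blocks one has $t_j\ge 2^{-r}t_n$, so the Peano bound $|K(s;\theta)|\le C\tau_j$, the regularity estimate, and the local integral $\int_{t_{j-1}}^{t_j}(t_n-s)^{-\mu}\,ds$ give $C\tau_n^{3-\mu}t_n^{\alpha_0-2}$ and $C\tau_n^{2}t_n^{\alpha_0-1-\mu}$, respectively. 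Substituting $\tau_n\le Cn^{r-1}\tau^r$ and $t_n=(n\tau)^r$, factoring out $t_n^{-\alpha_0}$, and bounding the residual $n$-power by a power of $N$ when its exponent is positive and by $1$ otherwise yields $Ct_n^{-\alpha_0}N^{-\min\{r(1+2\alpha_0-\mu),2\}}$ for both (here I use $3-\mu>2$ to absorb the first block's exponent into $\min\{r(1+2\alpha_0-\mu),2\}$).

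\textbf{The main obstacle.} The delicate block is $(\tilde R_{2222})^n$, where the mismatch between $\mu$ and $\alpha_0$ is felt. Here $t_n-t_j\ge(1-2^{-r})t_n$ gives $|\mathcal{K}(t_n-s)|\le Ct_n^{-\mu}$ and hence $\|(\tilde R_{2222})^n\|\le Ct_n^{-\mu}\sum_{j=2}^{\lceil n/2\rceil}\tau_j^{3}t_j^{\alpha_0-2}$. The crucial point is to \emph{not} bound $t_n^{-\mu}$ by its worst value $t_2^{-\mu}$: instead I would keep $t_n^{-\mu}=T^{-\mu}n^{-r\mu}N^{r\mu}$, substitute $\tau_j^{3}t_j^{\alpha_0-2}\le Cj^{\,r(1+\alpha_0)-3}N^{-r(1+\alpha_0)}$, and invoke \eqref{bs} to replace $\sum_{j}j^{\,r(1+\alpha_0)-3}$ by its three-branch bound (the branches being $r(1+\alpha_0)>2$, $=2$, $<2$). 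Recombining the $n$-powers against the target factor $t_n^{-\alpha_0}=T^{-\alpha_0}n^{-r\alpha_0}N^{r\alpha_0}$ leaves a residual $n^{\,r(\alpha_0-\mu)}$ (or $n^{\,r(1+2\alpha_0-\mu)-2}$ in the first branch), which I would bound by $N^{\,r(\alpha_0-\mu)}$ when $\alpha_0\ge\mu$ and by $1$ when $\alpha_0<\mu$. Carrying out this bookkeeping in each of the three branches produces precisely $Ct_n^{-\alpha_0}\Phi^N(\alpha_0,\mu,r)$, and assembling the five estimates completes the proof. The only real work is this last branch-by-branch accounting; everything else is a transcription of Lemma~\ref{lem04}.
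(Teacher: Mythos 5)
Your proposal is correct and follows essentially the same route as the paper's own proof: the identical decomposition into $(\tilde R_2)^1$, $(\tilde R_{21})^n$, and the diagonal/near-diagonal/far blocks of $(\tilde R_{22})^n$, the same kernel bound $|\mathcal{K}(t_n-s)|\le Ct_n^{-\mu}$ away from the singularity, the same invocation of \eqref{bs}, and the same exponent bookkeeping that factors out $t_n^{-\alpha_0}$ and splits on the sign of $r(1+2\alpha_0-\mu)-2$ (equivalently of $\alpha_0-\mu$) to produce $\Phi^N(\alpha_0,\mu,r)$. No gaps; the branch-by-branch accounting you describe is exactly what the paper carries out.
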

 \begin{proof}
    For \eqref{q003}, we shall apply the regularity result \eqref{regu} to deduce the desired result. \par
   \textbf{I: Estimate of $\|(\tilde{R}_2)^1\|$.} We employ \eqref{regu} to get
 \begin{align*}
    \|(\tilde{R}_2)^1\| & \leq  \int_{0}^{t_1} |\mathcal{K}(t_1-s)|  \Big( \int_{s}^{t_1} \|\p_{\theta} \mathcal{A} u(\cdot,\theta) \| d\theta \Big) ds \\
 & \leq C_0 \int_{0}^{t_1}  \theta^{\alpha_0-1} d\theta  \int_{0}^{t_1}(t_1-s)^{-\mu} ds    \leq Ct_1^{\alpha_0} 
\int_{0}^{t_1}(t_1-s)^{-\mu} ds \\
 & \leq Ct_1^{\alpha_0} t_1^{1-\mu} = Ct_1^{1+\alpha_0-\mu} \leq C N^{-r(1+\alpha_0-\mu)}.
 \end{align*}

 \textbf{II: Estimate of $\|(\tilde{R}_{21})^n\|$.} For $n\geq 2$, we have $t_n-t_1\geq c \, t_n$,
satisfying for $s\in (0,t_1)$
  \begin{align*}
     |\mathcal{K}(t_n-s)| & \leq C(t_n-s)^{-\mu} \leq C(t_n-t_1)^{-\mu} \leq C t_n^{-\mu},
  \end{align*}
thus we employ \eqref{regu} to get
\begin{align*}
   \|(\tilde{R}_{21})^n\| & \leq  \int_{0}^{t_1} |\mathcal{K}(t_n-s)|  \Big( \int_{s}^{t_1} \|\p_{\theta} u(\cdot,\theta) \| d\theta \Big) ds \leq C t_n^{-\mu} t_1  \int_{0}^{t_1}  \theta^{\alpha_0-1} d\theta \\
&    \leq Ct_n^{-\mu} t_1^{\alpha_0+1}  \leq Ct_n^{-\alpha_0} t_n^{\alpha_0-\mu}  N^{-r(1+\alpha_0)} \leq Ct_n^{-\alpha_0} \frac{n^{r(\alpha_0-\mu)}}{N^{r(\alpha_0-\mu)}} N^{-r(1+\alpha_0)}  \\
& \leq Ct_n^{-\alpha_0} \, N^{-\min\{r(1+\alpha_0), \, r(1+2\alpha_0-\mu)\}}.
\end{align*}

 \textbf{III: Estimate of $\|(\tilde{R}_{22})^n\|$.} For $n\geq 2$, we rewrite $(\tilde{R}_{22})^n$ as
 \begin{align*}
        (\tilde{R}_{22})^n  & = \int_{t_{n-1}}^{t_n}\mathcal{K}(t_n-s) \Big(  \int_{t_{n-1}}^{t_n} K(s;\theta) \p_{\theta}^2 \mathcal{A}u(x,\theta)d\theta \Big)  ds \\
       & \quad + \sum\limits_{j=2}^{n-1}\int_{t_{j-1}}^{t_j}\mathcal{K}(t_n-s) \Big(  \int_{t_{j-1}}^{t_j} K(s;\theta) \p_{\theta}^2\mathcal{A} u(x,\theta)d\theta \Big)  ds =: (\tilde{R}_{221})^n + (\tilde{R}_{222})^n.
 \end{align*}
 We first bound $(\tilde{R}_{221})^n$ via \eqref{regu} and \eqref{ker}
 \begin{align*}
    \|(\tilde{R}_{221})^n\| & \leq C\tau_n \int_{t_{n-1}}^{t_n} |\mathcal{K}(t_n-s)| \Big( \int_{t_{n-1}}^{t_n} \|\p_{\theta}^2 \mathcal{A}u(\cdot,\theta)\|d\theta  \Big) ds \\
    & \leq C\tau_n^2 t_{n-1}^{\alpha_0-2} \int_{t_{n-1}}^{t_n} (t_n-s)^{-\mu} ds \leq C\tau_n^{3-\mu} t_{n}^{\alpha_0-2} = Ct_n^{-\alpha_0}\tau_n^{3-\mu} t_{n}^{2\alpha_0-2},
 \end{align*}
 which follows
 \begin{align*}
      \tau_n^{3-\mu} t_{n}^{2\alpha_0-2} & \leq C  \frac{n^{(r-1)(3-\mu)}}{N^{r(3-\mu)}} \Big(\frac{n}{N}\Big)^{2r(\alpha_0-1)} = C \frac{n^{r(1+2\alpha_0-\mu)-(3-\mu)} }{N^{r(1+2\alpha_0-\mu)-(3-\mu)}} N^{-(3-\mu)}
 \end{align*}
 to arrive at
 \begin{align*}
    \|(\tilde{R}_{221})^n\| & \leq C t_{n}^{-\alpha_0}  N^{-\min\left\{ 3-\mu,  \, r(1+2\alpha_0-\mu) \right\}}.
 \end{align*}
 We then bound $(\tilde{R}_{222})^n$ and rewrite it as
 \begin{align*}
    (\tilde{R}_{222})^n & = \sum\limits_{j=\lceil n/2 \rceil+1}^{n-1}\int_{t_{j-1}}^{t_j}\mathcal{K}(t_n-s) \Big(  \int_{t_{j-1}}^{t_j} K(s;\theta) \p_{\theta}^2 \mathcal{A}u(x,\theta)d\theta \Big)  ds \\
  & \quad + \sum\limits_{j=2}^{\lceil n/2 \rceil} \int_{t_{j-1}}^{t_j}\mathcal{K}(t_n-s) \Big(  \int_{t_{j-1}}^{t_j} K(s;\theta) \p_{\theta}^2 \mathcal{A}u(x,\theta)d\theta \Big)  ds =: (\tilde{R}_{2221})^n + (\tilde{R}_{2222})^n.
 \end{align*}
 We observe that, for $2\le j \leq n-1$,
 \begin{align*}
    \Big\| \int_{t_{j-1}}^{t_j} & \mathcal{K}(t_n-s) \Big(  \int_{t_{j-1}}^{t_j} K(s;\theta) \p_{\theta}^2 \mathcal{A}u(\cdot,\theta)d\theta \Big)  ds  \Big\| \\
    & \leq C\tau_j \int_{t_{j-1}}^{t_j}|\mathcal{K}(t_n-s)| \int_{t_{j-1}}^{t_j}\|\p_{\theta}^2 \mathcal{A}u(\cdot,\theta)\| d\theta  ds \\
    & \leq C\tau_j \int_{t_{j-1}}^{t_j} (t_n-s)^{-\mu}   ds  \int_{t_{j-1}}^{t_j}\theta^{\alpha_0-2} d\theta
    \leq C\tau_j^2 t_j^{\alpha_0-2} \int_{t_{j-1}}^{t_j} (t_n-s)^{-\mu} ds.
 \end{align*}
 Hence, for $\lceil n/2 \rceil +1 \leq j \leq n-1$, we get $t_j\geq 2^{-r}t_n$, which gives
 \begin{align*}
     \|(\tilde{R}_{2221})^n\|
   & \leq  C\sum\limits_{j=\lceil n/2 \rceil+1}^{n-1}  \tau_j^2 t_{j-1}^{\alpha_0-2} \int_{t_{j-1}}^{t_j} (t_n-s)^{-\mu} ds  \\
   & \leq  C \tau_n^2 \, (t_{\lceil n/2 \rceil})^{\alpha_0-2}  \sum\limits_{j=\lceil n/2 \rceil+1}^{n-1}  \int_{t_{j-1}}^{t_j} (t_n-s)^{-\mu} ds \\
   & \leq  C \tau_n^2 \, 2^{r(2-\alpha_0)}  t_{n}^{\alpha_0-2} \int_{t_{\lceil n/2 \rceil}}^{t_{n-1}} (t_n-s)^{-\mu} ds \leq  C  t_{n}^{-\alpha_0} \tau_n^{2} \,  t_{n}^{2\alpha_0-1-\mu}  \\
   & \leq C t_n^{-\alpha_0} \, \frac{n^{r(1+2\alpha_0-\mu)-2}}{N^{r(1+2\alpha_0-\mu)-2}} N^{-2}   \leq C t_{n}^{-\alpha_0} \,  N^{-\min\left\{ r(1+2\alpha_0-\mu),\, 2 \right\}}.
 \end{align*}
 Then for $ 2 \leq j \leq \lceil n/2 \rceil $, we have $(t_n-t_j)^{-\mu} \leq  (1-2^{-r})^{-\mu} t_n^{-\mu} $, which implies
 \begin{align*}
     \|(\tilde{R}_{2222})^n\|
   & \leq  C\sum\limits_{j=2}^{\lceil n/2 \rceil}  \tau_j^2 t_{j-1}^{\alpha_0-2}  \int_{t_{j-1}}^{t_j} (t_n-s)^{-\mu} ds  \leq  C \sum\limits_{j=2}^{\lceil n/2 \rceil}  \tau_j (t_n-t_j)^{-\mu}  \tau_j^2 t_{j}^{\alpha_0-2}  \\
 & \leq C t_n^{-\mu} \sum\limits_{j=2}^{\lceil n/2 \rceil}  t_{j}^{\alpha_0-2}  \tau_j^3
 \leq C t_n^{-\mu} \sum\limits_{j=2}^{\lceil n/2 \rceil} \frac{j^{-r(2-\alpha_0)}}{N^{-r(2-\alpha_0)}} \frac{j^{3(r-1)}}{N^{3r}}.
 \end{align*}
This further gives
 \begin{align*}
     \|(\tilde{R}_{2222})^n\| & \leq  C t_n^{-\mu} N^{-r(1+\alpha_0)} \sum\limits_{j=2}^{\lceil n/2 \rceil} j^{r(1+\alpha_0)-3} \nonumber \\
     & \leq  C t_n^{-\alpha_0} \left(\frac{n}{N}\right)^{r(\alpha_0-\mu)} N^{-r(1+\alpha_0)} \sum\limits_{j=2}^{\lceil n/2 \rceil} j^{r(1+\alpha_0)-3} \nonumber \\
     &  \leq  C  \, t_n^{-\alpha_0}  \times
      \begin{cases}
       \displaystyle  \frac{n^{r(1+2\alpha_0-\mu)-2}}{N^{r(1+2\alpha_0-\mu)-2}} N^{-2},  &  r(1+\alpha_0)>2, \nonumber \vspace{2mm} \\
       \displaystyle  \frac{n^{r(1+2\alpha_0-\mu)-r(1+\alpha_0)}}{N^{r(1+2\alpha_0-\mu)-r(1+\alpha_0)}} N^{-r(1+\alpha_0)} \ln n, &  r(1+\alpha_0)=2, \nonumber \vspace{2mm} \\
       \displaystyle  \frac{n^{r(1+2\alpha_0-\mu)-r(1+\alpha_0)}}{N^{r(1+2\alpha_0-\mu)-r(1+\alpha_0)}} N^{-r(1+\alpha_0)}, &  r(1+\alpha_0)<2
      \end{cases} \nonumber \vspace{2mm} \\
     &  \leq   C \, t_n^{-\alpha_0} \, \Phi^N(\alpha_0,\mu,r), 
 \end{align*}
 where we estimated $ \sum\limits_{j=2}^{\lceil n/2 \rceil} j^{r(1+\alpha_0)-3}$ by \eqref{bs} and note that
 \begin{align*}
     \Phi^N(\alpha_0,\mu,r) \leq C N^{-\min\{r(1+2\alpha_0-\mu), \, r(1+\alpha_0), \, 2 \}} (1+ \ln N).
 \end{align*}
 Based on the above analysis, we complete the proof.
 \end{proof}

 Therefore, we further yield the following lemma.

 \begin{lemma} \label{lem5.2}
   Under Lemma \ref{lem5.1} and Lemma \ref{lem01}, we get
    \begin{align*}
        \sum_{n=1}^{m}  P_{m-n}^{(m)} \left\|(\tilde{R}_2)^n\right\| \leq C (1+ \ln N ) \, N^{-\min\{r(1+2\alpha_0-\mu), \, r(1+\alpha_0), \, 2 \}}, \quad 1\leq  m \leq N.
    \end{align*}
 \end{lemma}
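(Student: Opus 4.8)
The plan is to mirror the argument already used for Lemma~\ref{lem05}, splitting the weighted sum according to whether $n=1$ or $n\ge 2$ and invoking the pointwise bounds from Lemma~\ref{lem5.1} together with the kernel properties in Lemma~\ref{lem01}. Concretely, I would write
\[
\sum_{n=1}^{m} P_{m-n}^{(m)}\|(\tilde R_2)^n\|
= P_{m-1}^{(m)}\|(\tilde R_2)^1\|
+ \sum_{n=2}^{m} P_{m-n}^{(m)}\|(\tilde R_2)^n\|
\]
and treat the two pieces separately.

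For the first piece, I would combine the kernel bound $P_{m-1}^{(m)}\le \Gamma(2-\alpha_0)\tau_1^{\alpha_0}$ from Lemma~\ref{lem01}(i) with the estimate $\|(\tilde R_2)^1\|\le CN^{-r(1+\alpha_0-\mu)}$ from Lemma~\ref{lem5.1}. Since the graded mesh \eqref{mesh} gives $\tau_1=t_1=T N^{-r}$, and hence $\tau_1^{\alpha_0}\le CN^{-r\alpha_0}$, this yields
\[
P_{m-1}^{(m)}\|(\tilde R_2)^1\|
\le C\,N^{-r\alpha_0}N^{-r(1+\alpha_0-\mu)}
= C\,N^{-r(1+2\alpha_0-\mu)},
\]
which is already dominated by the target rate $N^{-\min\{r(1+2\alpha_0-\mu),\,r(1+\alpha_0),\,2\}}$.

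For the second piece, I would insert the $n\ge2$ bound from Lemma~\ref{lem5.1}, namely $\|(\tilde R_2)^n\|\le Ct_n^{-\alpha_0}\big[N^{-\min\{r(1+2\alpha_0-\mu),\,2\}}+\Phi^N(\alpha_0,\mu,r)\big]$, whose key feature is that the entire $n$-dependence is carried by the single weight $t_n^{-\alpha_0}$. The crucial step is then the summation identity already recorded in \eqref{bbb}, which is exactly Lemma~\ref{lem01}(iii) with $q=0$:
\[
\sum_{n=2}^{m}P_{m-n}^{(m)}t_n^{-\alpha_0}
\le \Gamma(1-\alpha_0)\sum_{n=1}^{m}P_{m-n}^{(m)}\beta_{1-\alpha_0}(t_n)
\le \Gamma(1-\alpha_0)\beta_1(t_m)=\Gamma(1-\alpha_0),
\]
so the weighted sum collapses to a constant multiple of the bracketed quantity. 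Finally I would invoke the dominance relation $\Phi^N(\alpha_0,\mu,r)\le CN^{-\min\{r(1+2\alpha_0-\mu),\,r(1+\alpha_0),\,2\}}(1+\ln N)$ established at the close of Lemma~\ref{lem5.1}, together with the elementary comparison $N^{-\min\{r(1+2\alpha_0-\mu),\,2\}}\le N^{-\min\{r(1+2\alpha_0-\mu),\,r(1+\alpha_0),\,2\}}$, to absorb both contributions into the stated rate.

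I expect the main obstacle to be purely bookkeeping rather than analytic: correctly tracking the three-way minimum inside $\Phi^N$ across the case split $r(1+\alpha_0)>2$, $r(1+\alpha_0)=2$, and $r(1+\alpha_0)<2$, and confirming that the borderline logarithmic loss in the case $r(1+\alpha_0)=2$ is cleanly absorbed by the single factor $(1+\ln N)$ without degrading the exponent. Once Lemma~\ref{lem5.1} supplies the pointwise bounds with the clean $t_n^{-\alpha_0}$ weight, the orthogonality in Lemma~\ref{lem01}(iii) performs all the analytic work, and the remainder is an exponent comparison.
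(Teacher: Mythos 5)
Your proposal is correct and follows essentially the same route as the paper's own proof: the same split into the $n=1$ term (handled by Lemma~\ref{lem01}(i) with $\tau_1^{\alpha_0}\le CN^{-r\alpha_0}$) and the $n\ge 2$ tail (collapsed via \eqref{bbb}, i.e.\ Lemma~\ref{lem01}(iii) with $q=0$), followed by the dominance bound on $\Phi^N(\alpha_0,\mu,r)$ and the exponent comparison with the three-way minimum. No gaps; the bookkeeping you anticipate is exactly what the paper does.
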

 \begin{proof}
 We first apply Lemma \ref{lem01} (i) and Lemma \ref{lem5.1} to get
 \begin{align*}
     P_{m-1}^{(m)} \left\|(\tilde{R}_2)^1\right\| \leq C\tau_1^{\alpha_0} N^{-r(1+\alpha_0-\mu)} \leq C N^{-r(1+2\alpha_0-\mu)}.
 \end{align*}
 Then we have
 \begin{align*}
     \sum_{n=2}^{m}  P_{m-n}^{(m)} \left\|(\tilde{R}_2)^n\right\| \leq C  \sum_{n=2}^{m}  P_{m-n}^{(m)} t_{n}^{-\alpha_0} \Big[ N^{-\min\{ 2, \,  r(1+\alpha_0) \}}  + \Phi^N(\alpha_0,\mu,r) \Big],
 \end{align*}
 which follows \eqref{bbb} to obtain
 \begin{align*}
   \sum_{n=2}^{m}  P_{m-n}^{(m)} \left\|(\tilde{R}_2)^n\right\| & \leq  C \Big[ N^{-\min\{ 2, \,  r(1+\alpha_0) \}}  +  N^{-\min\{r(1+2\alpha_0-\mu), \, r(1+\alpha_0), \, 2 \}} (1+ \ln N) \Big] \\
 & \leq CN^{-\min\{r(1+2\alpha_0-\mu), \, r(1+\alpha_0), \, 2 \}} (1+ \ln N).
 \end{align*}
 Therefore, the proof is finished.
 \end{proof}

\noindent\emph{\textbf{Proof of \eqref{conv3}.}} Following \eqref{stabi}, we use \eqref{q006}--\eqref{q007} to similarly obtain
   \begin{align*}
     \|\tilde\rho^m\| & \leq C(T) \Big( \|\tilde\rho^0\| + \max\limits_{1\leq n \leq m}  \sum_{j=1}^{n} P^{(n)}_{n-j} \|\tilde{R}^j\|  \Big) \\
    & \leq C(T)\max\limits_{1\leq n \leq m}  \sum_{j=1}^{n} P^{(n)}_{n-j} \big( \|(R_1)^j\| + \|(\tilde{R}_2)^j\| \big),  \quad 1\leq m \leq N,
 \end{align*}
 which follows Lemma \ref{lem03} and Lemma \ref{lem5.2} to arrive at
 \begin{align*}
   \|\tilde\rho^m\| & \leq C(T) N^{-\min\{2-\alpha_0, \, r\alpha_0 \} }   + C(T) N^{-\min\{r(1+2\alpha_0-\mu), \, r(1+\alpha_0), \, 2 \}} (1+ \ln N).
 \end{align*}
 We then apply $\displaystyle \frac{1+ \ln N}{N^{\alpha_0}}\leq C$ to yield
 \begin{align*}
    & N^{-\min\{r(1+2\alpha_0-\mu), \, r(1+\alpha_0), \, 2 \}} (1+ \ln N) \\
    & \qquad = N^{-\min\{r(1+\alpha_0-\mu)+(r-1)\alpha_0, \, r\alpha_0+(r-\alpha_0), \, 2-\alpha_0 \}} N^{-\alpha_0} (1+ \ln N) \\
    & \qquad \leq  N^{-\min\{r(1+\alpha_0-\mu), \, r\alpha_0, \, 2-\alpha_0 \}} N^{-\alpha_0} (1+ \ln N) \\
    & \qquad \leq C N^{-\min\{r\alpha_0+ r(1-\mu), \, r\alpha_0, \, 2-\alpha_0 \}}  \\
    & \qquad \leq C N^{-\min\{ r\alpha_0, \, 2-\alpha_0 \}}.
 \end{align*}
 We combine the above analysis to complete the proof.

\section*{Declaration}

\vskip 3mm
\noindent\textbf{Conflict of interest} The authors declare that they have no known competing financial interests or personal
relationships that could have appeared to influence the work reported in this paper.

%\vskip 3mm
%\noindent\textbf{Acknowledgments} The authors are grateful to the reviewers for their helpful suggestions to enhance the quality of the work.

\vskip 3mm
\noindent\textbf{Funding}
 This work was partially supported by the China Postdoctoral Science Foundation (No. 2024M762459), the Natural Science Foundation of Hubei Province (No. 2025AFB109), the Postdoctor Project of Hubei Province (No. 2025HBBSHCXB021), and the Postdoctoral Fellowship Program of CPSF (No. GZC20240938).

\vskip 3mm
\noindent\textbf{Data Availability} The datasets are available from the corresponding author upon reasonable request.

%%%%%%%%%%%%%%%%%%%%%%%%%%%%%%%%%%%%%%%%%%%%%%%%%%%%%%%%%%%%%%%%%%%%%%

\end{document}